\pgfplotsset{compat=newest} 
\pgfplotsset{plot coordinates/math parser=false}
\theoremstyle{plain}
\newtheorem{theorem}{Theorem}[section] 
\newtheorem*{theorem*}{Theorem}
\newtheorem{lemma}[theorem]{Lemma} 
\newtheorem*{lemma*}{Lemma} 
\newtheorem{corollary}[theorem]{Corollary} 
\newtheorem*{corollary*}{Corollary} 
\newtheorem*{proposition*}{Proposition}
\theoremstyle{definition}
\newtheorem{definition}[theorem]{Definition}
\newtheorem*{definition*}{Definition} 
\theoremstyle{remark} 
\newtheorem{remark}[theorem]{Remark}
\newtheorem*{remark*}{Remark}
\newcommand{\Id}{\operatorname{Id}}   
\newcommand{\diag}{\operatorname{diag}}   
\newcommand{\I}{\mathcal{I}}   
\newcommand{\N}{\mathbb{N}}  
\newcommand{\R}{\mathbb{R}}  
\newcommand{\HT}{\mathcal{H}\operatorname{-Tucker}} 
\definecolor{rwth-50}{RGB}{142 186 229}
\newcommand{\parameterdependent}{parameter\hyp{}dependent}
\newcommand{\righthandside}{right\hyp{}hand~side}
\newcommand{\lowrank}{low\hyp{}rank}
\newcommand{\Galerkinansatz}{Galerkin\hyp{}ansatz}
\newcommand{\twogrid}{two\hyp{}grid}
\begin{document}
\title{A parameter-dependent smoother for the multigrid method}
\author{Lars Grasedyck\thanks{IGPM, RWTH~Aachen University, Templergraben~55, 52056~Aachen, \href{mailto:lgr@igpm.rwth-aachen.de}{lgr@igpm.rwth-aachen.de}} 
\and Maren Klever\thanks{Department of Physics, University of Regensburg, Universit\"atsstra{\ss}e 31, 93040 Regensburg, \newline\href{https://orcid.org/0000-0003-2614-163X}{ORCID~iD:~0000-0003-2614-163X}, \href{mailto:klever@igpm.rwth-aachen.de}{klever@igpm.rwth-aachen.de}} 
\and Christian L{\"o}bbert\thanks{IGPM, RWTH~Aachen University, Templergraben~55, 52056~Aachen, \href{https://orcid.org/0000-0001-8662-6669}{ORCID iD: 0000-0001-8662-6669}, \href{mailto:loebbert@igpm.rwth-aachen.de}{loebbert@igpm.rwth-aachen.de}} 
\and Tim A. Werthmann\thanks{IGPM, RWTH~Aachen University, Templergraben~55, 52056~Aachen, \href{https://orcid.org/0000-0001-9870-9787}{ORCID~iD: 0000-0001-9870-9787}, \href{mailto:werthmann@igpm.rwth-aachen.de}{werthmann@igpm.rwth-aachen.de}}
}
\maketitle
\begin{abstract}
The solution of parameter-dependent linear systems, by classical methods, leads to an arithmetic effort that grows exponentially in the number of parameters.
This renders the multigrid method, which has a well understood convergence theory, infeasible.
A parameter-dependent representation, e.g., a low-rank tensor format, can avoid this exponential dependence, but in these it is unknown how to calculate the inverse directly within the representation.
The combination of these representations with the multigrid method requires a parameter-dependent version of the classical multigrid theory and a parameter-dependent representation of the linear system, the smoother, the prolongation and the restriction.
A derived parameter-dependent version of the smoothing property, fulfilled by parameter-dependent versions of the Richardson and Jacobi methods, together with the approximation property prove the convergence of the multigrid method for arbitrary parameter-dependent representations.
For a model problem low-rank tensor formats represent the parameter-dependent linear system, prolongation and restriction.
The smoother, a damped Jacobi method, is directly approximated in the low-rank tensor format by using exponential sums.
Proving the smoothing property for this approximation guarantees the convergence of the parameter-dependent method.
Numerical experiments for the parameter-dependent model problem, with bounded parameter value range, indicate a grid size independent convergence rate.
\par\vspace\baselineskip\noindent
\emph{Keywords}: multigrid, PDEs, parameter-dependent problems, low-rank tensor formats, exponential sums
\par\vspace\baselineskip\noindent
\emph{Mathematics Subject Classification (2010)}: 65N55, 15A69
\end{abstract}
\section{Introduction}\label{sec:Introduction}
The modeling of modern scientific problems often leads to partial differential equations. 
Oftentimes one would like to consider a dependence on parameters or uncertainties within a model. 
After suitable discretization one obtains a \parameterdependent\ linear system of equations of the form
\begin{equation*}
A(p) ~ u(p) = f(p),
\end{equation*}
where the operator $A$, the solution $u$ and the \righthandside\ $f$ all depend on the parameter $p \coloneqq (p^{(1)}, p^{(2)}, \dots, p^{(d)})$.

Assuming $n$ different choices for every $p^{(\nu)}$ with $\nu \in \lbrace 1,2, \dots, d \rbrace$, one has to solve $n^d$ linear systems.
This exponential scaling in the dimension $d$ is commonly known as \emph{curse of dimensionality}, which renders classical methods for $d>2$ infeasible.

To overcome the curse of dimensionality, one needs an efficient solver for linear systems on the one hand and on the other hand a tool to exploit the underlying structure of the problem in order to avoid exponential dependency on the number of parameters.
The multigrid method is a state-of-the-art solver for large scale linear systems, since it often scales linearly in the problem size. 
Its convergence theory is well studied and understood, see, e.g.,~\cite{Brandt1977,Hackbusch1985,Xu1992,Yserentant1986}.

For \parameterdependent\ problems multigrid methods have already been successfully used in~\cite{Ballani2013,Grasedyck2017}. 
There the authors modeled the parameter dependency by means of tensor formats~\cite{DeLathauwer2000,Grasedyck2013,Hackbusch2012,Kolda2009,Oseledets2011}, which opens up new possibilities to represent \parameterdependent\ linear systems. 
As these formats allow us to perform arithmetic operations within them, we use tensor formats to formulate a multigrid method for \parameterdependent\ problems. 

While there are results about the solution of \parameterdependent\ linear systems in tensor formats, e.g., based on iterative solvers \cite{Ballani2013,Khoromskij2011,Kressner2011} or local minimization \cite{Dolgov2014,Oseledets2012}, the multigrid method for tensor formats was used in~\cite{Ballani2013,Grasedyck2017} and only discussed shortly in~\cite{Hackbusch2015}.

To the best of our knowledge, there have been no authors who have presented and proven the multigrid theory for \parameterdependent\ problems in a general and detailed setting. 
For the \parameterdependent\ multigrid method, by means of tensor formats, we need an efficient smoother, e.g., the Jacobi method. 
As mentioned by Hackbusch, ``in the tensor case, the performance of this iteration is already too complicated'' and ``[i]nstead one may try to use an approximation''~\cite{Hackbusch2015}.

To solve these problems, in Section~\ref{sec:parameterProblems} we introduce our model problem of a \parameterdependent\ partial differential equation.
In Section~\ref{sec:parameterDepMult} we establish and prove the theory for the \parameterdependent\ multigrid method for a general \parameterdependent\ representation using classical multigrid convergence results.
Further in Section~\ref{sec:repParameterProblems} we use tensor formats to represent the \parameterdependent\ multigrid method. 
For our model problem, we derive a \parameterdependent\ representation of the operator, which we discretize by finite differences.
As a smoother, we approximate the \parameterdependent\ Jacobi method by exponential sums and prove that this approximation fulfills the smoothing property.
We conclude in Section~\ref{sec:NumExp} with numerical experiments, observing a grid size independent convergence behavior of the multigrid method using our \parameterdependent\ Jacobi smoother.
\section{Random diffusion model problem}\label{sec:parameterProblems}
We consider the following PDE as model problem, where the diffusion $\sigma(x,p)$ depends on some parameter $p$:
\begin{equation}\label{eq:cookie} 
\begin{aligned}
-\nabla \cdot \left( \sigma(x, p) \nabla u(x, p) \right) & = f(x) \quad && \text{ in } \Omega,\\
u(x, p) &  = 0 \quad && \text{ on } \partial \Omega.
\end{aligned}
\end{equation}
We assume that $\sigma(x, p)$ is piecewise constant on each $\Omega_{\nu} \subseteq \Omega$ for a partition $(\Omega_{\nu})_{\nu \in \{1 \dots, d\}}$ of $\Omega$. 
Using the multilinearity of the scalar product and the weak formulation of equation~\eqref{eq:cookie}, we can formulate the problem as
\begin{align}\label{eq:cookie2}
	\underbrace{ \left( \sum\limits_{\nu = 1}^d \sigma_{\nu}(p^{(\nu)}) A^{(\nu)} \right) }_{\eqqcolon A(p)} u(p)  = f  ~,
\end{align}
where each $A^{(\nu)}$ only depends on $\Omega_{\nu}$ for $\nu \in \lbrace 1,2, \dots, d \rbrace$, i.e., $A^{(\nu)}$ is parameter-independent.

A similar structure like in equation~\eqref{eq:cookie2} is obtained, e.g., for a general random field with known mean field and covariance, instead of a parameter-dependent diffusion coefficient.
After truncation to a finite number of terms, the Karhunen-Lo{\`e}ve expansion, which separates stochastic and deterministic variables, gives an affine parameter-dependent linear system, see, e.g.,~\cite{Matthies2005,Matthies2012,Schwab2006}.

Since the argumentation in Section~\ref{sec:parameterDepMult} is independent of the underlying representation, the results hold also for other choices of representations, e.g., one could approximate $\sigma$ or the operator, if the diffusion $\sigma$ has a nonlinear dependency, cf.~\cite{Khoromskij2010}.

Here we discretize the parameter space by choosing a finite number of parameters $p \coloneqq (p^{(1)}, \dots, p^{(\nu)}, \dots, p^{(d)})$ from a discrete set $\I$. 
Therefore we need the following definition:
\begin{definition}[Mode and dimension]\label{def:modesDimension}
Let $\I \coloneqq \bigtimes_{\nu=1}^d \I_{\nu}$ be an index set with $\lvert \I_{\nu} \rvert = n_{\nu}$ for all $\nu \in D \coloneqq \lbrace 1, \dots, d \rbrace$. 
We call each $\nu \in D$ \emph{mode} and $d$ the \emph{dimension}. 
\end{definition}
Choosing fixed discrete values for all $p^{(\nu)}$, i.e., $p^{(\nu)} \in \lbrace p^{(\nu)}(1), p^{(\nu)}(2), \dots, p^{(\nu)}(n_{\nu}) \rbrace \eqqcolon \I_{\nu}$, we reformulate the problem as:
\begin{align}\label{eq:problem}
\text{Solve } \quad A(p) u(p) =  f \quad \text{ for all } p \in \I .
\end{align}
Applying classical methods one needs to solve a system of $\prod_{\nu=1}^d n_{\nu} \approx n^d$ linear equations. 
Because of this exponential scaling in $d$, we want to solve the \parameterdependent\ system simultaneously for all $p \in \mathcal{I}$. 
For this reason we need an efficient iterative solution method. 
Next, we recapitulate some convergence results of the classical multigrid method.
\section{Parameter-dependent multigrid method}\label{sec:parameterDepMult}
The complexity of the multigrid method often scales linearly or quasilinearly in the grid size, which makes it well suited for large systems of equations. 
The basic idea is to find a smooth approximation for the error of a given estimate of the fine grid solution on a coarser grid. 
Since the linear system on the coarser grid is smaller, its solution can be computed with less work.
Using this concept again to solve the equation system on the coarser grid, the recursion yields the multigrid method. 
We give the pseudocode of the V-cycle multigrid method in Procedure~\ref{alg:multigrid}.

\begin{algorithm}
\caption{$u_{\ell} \leftarrow \operatorname{multigrid}(u_{\ell}, f_{\ell}, \ell)$ }\label{alg:multigrid}
\begin{algorithmic}
\IF{ $\ell = 0$ }
\STATE{ $ u_{\ell} \leftarrow A_{\ell}^{-1} f_{\ell} $ }
\ELSE 
\STATE{ $u_{\ell} \leftarrow S_{\ell}^{\nu_1}(u_{\ell}, f_{\ell})$  }
\STATE{ $d_{\ell-1} \leftarrow R_{\ell}(f_{\ell} - A_{\ell} u_{\ell}) $ }
\STATE{ $e_{\ell-1} \leftarrow 0 $ }
\STATE{ $e_{\ell-1} \leftarrow \operatorname{multigrid}(e_{\ell-1}, d_{\ell-1}, \ell-1)$ }
\STATE{ $u_{\ell} \leftarrow u_{\ell} + P_{\ell} e_{\ell-1} $ }
\STATE{ $u_{\ell} \leftarrow S_{\ell}^{\nu_2}(u_{\ell}, f_{\ell})$  }
\ENDIF
\end{algorithmic} 
\end{algorithm}

Based on the convergence theory of Hackbusch~\cite{Hackbusch2016,Hackbusch1985}, we want to analyze the convergence of this method for \parameterdependent\ problems. 
Therefore we define the following notation:

Let $X_{\ell}$ denote the grid of level $\ell \in \N$ with grid size $h_{\ell} > 0$ and let $A_{\ell}$ denote the matrix corresponding to this grid. 
Analogously let us denote with $X_{\ell-1}$ the next coarser grid with corresponding matrix $A_{\ell-1}$. 
Further let $P_{\ell} : X_{\ell-1} \rightarrow X_{\ell}$ denote the prolongation and $R_{\ell} : X_{\ell} \rightarrow X_{\ell-1}$ the restriction, as well as $S_{\ell}$ the iteration matrix of the smoother corresponding to the grid of level $\ell$. 
We define the iteration matrix of the \twogrid\ method including $\nu_1 \in \N$ presmoothing steps and $\nu_2 \in \N_{0}$ postsmoothing steps by
	\begin{align*}
		M_{\ell}^{\operatorname{TGM}(\nu_1,\nu_2)} \coloneqq S_{\ell}^{\nu_2} \left( \Id - P_{\ell} A_{\ell-1}^{-1} R_{\ell} A_{\ell}\right) S_{\ell}^{\nu_1} ~.
	\end{align*}
For convenience we will choose $\nu_1 = \nu$ and $\nu_2 = 0$ in the following theoretical analysis of the method.
We will further denote the \parameterdependent\ version of an arbitrary object $G$ by $G(p)$, e.g., $A_{\ell}(p)$ denotes the \parameterdependent\ operator, which corresponds to the grid of level $\ell$.
We also assume that $A_{\ell}(p)$ is symmetric positive definite for all $p\in \mathcal{I}$.
	
The main idea to prove the convergence of the multigrid method is to split the iteration matrix of the method into two parts
	\begin{align*}
		\lVert M_{\ell}^{\operatorname{TGM}(\nu,0)}  \rVert_2 \leq \lVert A_{\ell}^{-1} - P_{\ell} A_{\ell-1}^{-1} R_{\ell} \rVert_2 ~ \lVert A_{\ell} S_{\ell}^{\nu} \rVert_2 ~,
	\end{align*}
where the first part must fulfill the \emph{approximation property} and the second part must fulfill the \emph{smoothing property}. 
Next, we extend these properties to the \parameterdependent\ case.
	\subsection{Smoothing property}\label{sec: Theory about the smoothing property for $A(p)$}
We define the smoothing property in the \parameterdependent\ case analogously to~\cite[Definition $11.25$]{Hackbusch2016}.
\begin{definition}[Smoothing property]\label{def:smoothingPropertyParameter}
For an iteration with \parameterdependent\ iteration matrix $S_{\ell}(p)$ the \emph{smoothing property} is defined as
\begin{align*}
\lVert A_{\ell}(p) S_{\ell}^{\nu}(p) \rVert_2 \leq \eta(\nu) \lVert A_{\ell}(p) \rVert_2
\end{align*}
for all $0 \leq \nu \leq \overline{\nu}(h_{\ell})$, $p \in \I$ and ${\ell} \in \N_{0}$, with functions $\eta$ and $\overline{\nu}$ satisfying 
\begin{align*}
&\lim\limits_{\nu \rightarrow \infty} \eta(\nu) = 0,\\
&\lim\limits_{h \rightarrow 0} \overline{\nu}(h) = \infty \quad \text{ or } \quad \overline{\nu}(h) = \infty
\end{align*}
independent of the level $\ell$.
\end{definition}
This property ensures that the approximation of the fine grid error is smooth enough to be approximated on the coarser grid.
We now define the iteration matrix of the damped Richardson method in case of \parameterdependent\ problems.
\begin{definition}[Damped Richardson method]\label{def:dampedRichardsonMethod}
The \emph{damped Richardson method} is
\begin{align*}
u^{j+1}_{\ell}(p) = u^{j}_{\ell}(p) + \omega(p) \left( f_{\ell} - A_{\ell}(p) u^{j}_{\ell}(p) \right),
\end{align*}
with damping factor $\omega(p) \in \left(0,1\right] $, parameter $p \in \I$ and $\ell \in \N_{0}$. 
Its iteration matrix is given by
\begin{align*}
	S_{\text{Rich,}\omega, \ell}(p) \coloneqq \Id - \omega(p) A_{\ell}(p).
\end{align*}
\end{definition}
To define the iteration matrix of the damped Jacobi method, we denote with $D_{\ell}(p) \coloneqq \diag(A_{\ell}(p))$ the diagonal of $A_{\ell}(p)$.
\begin{definition}[Damped Jacobi method]\label{def:dampedJacobiMethodParameter}
The \emph{damped Jacobi method} is
\begin{align*}
u^{j+1}_{\ell}(p) = u^{j}_{\ell}(p) + \omega(p) D_{\ell}^{-1}(p)  \left( f_{\ell} - A_{\ell}(p) u^{j}_{\ell}(p) \right),
\end{align*}
with damping factor $\omega(p) \in \left(0,1\right]$, parameter $p \in \I$ and $\ell \in \N_{0}$.
Its iteration matrix is given by
	\begin{align*}
		S_{\text{Jac,}\omega, \ell}(p) \coloneqq \Id - \omega(p) D_{\ell}^{-1}(p) A_{\ell}(p).
	\end{align*}
\end{definition}
Next, we verify the smoothing property for those methods.
Therefore we formulate and prove the \parameterdependent\ version of a classic result~\cite[Lemma $11.23$]{Hackbusch2016}.
\begin{lemma}\label{lemma:smoothingPropertyParameter}
Let $0 \leq B(p) = B^{T}(p) \leq \Id$ for all $p\in \mathcal{I}$, then for any $\nu \in \N_0$
\begin{align*}
	\left\lVert B(p) \left( \Id - B(p)  \right)^{\nu} \right\rVert_2 \leq \eta_{0}( \nu )
\end{align*} 
holds for all $p \in \mathcal{I}$ with $\eta_{0}( \nu ) \coloneqq \frac{\nu^{\nu}}{( \nu + 1 )^{\nu + 1}} $.
\end{lemma}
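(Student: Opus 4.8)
The plan is to reduce the matrix inequality to a one-dimensional optimization problem by exploiting that $B(p)$ is symmetric with spectrum contained in $[0,1]$. First I would fix an arbitrary $p \in \I$ and observe that $\Id - B(p)$, and hence all its powers, are polynomials in $B(p)$; therefore $B(p)\left(\Id - B(p)\right)^{\nu}$ is again symmetric, and its spectral norm coincides with its spectral radius. Passing to an orthonormal eigenbasis of $B(p)$, an eigenvalue $\lambda$ of $B(p)$ gives rise to the eigenvalue $\lambda(1-\lambda)^{\nu}$ of $B(p)\left(\Id - B(p)\right)^{\nu}$. The hypothesis $0 \leq B(p) = B^{T}(p) \leq \Id$ forces $\lambda \in [0,1]$, so that
\begin{align*}
\left\lVert B(p)\left(\Id - B(p)\right)^{\nu}\right\rVert_2 = \max_{\lambda}\, \lambda(1-\lambda)^{\nu} \leq \max_{x \in [0,1]} x(1-x)^{\nu},
\end{align*}
where $\lambda$ ranges over the eigenvalues of $B(p)$.

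The key point for the parameter-dependent setting is that the final bound depends only on the containment of $\sigma(B(p))$ in $[0,1]$ and not on the particular spectrum. Consequently it holds uniformly over all $p \in \I$ with no additional argument, which is precisely the uniformity required by Definition~\ref{def:smoothingPropertyParameter}.

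It remains to carry out the elementary maximization of $g(x) \coloneqq x(1-x)^{\nu}$ on $[0,1]$. For $\nu \geq 1$ differentiation yields $g'(x) = (1-x)^{\nu-1}\left(1 - (\nu+1)x\right)$, whose only interior critical point is $x = \tfrac{1}{\nu+1}$, while the endpoints $x=0$ and $x=1$ both give $g=0$. Evaluating at the interior maximizer produces
\begin{align*}
g\!\left(\tfrac{1}{\nu+1}\right) = \frac{1}{\nu+1}\left(\frac{\nu}{\nu+1}\right)^{\nu} = \frac{\nu^{\nu}}{(\nu+1)^{\nu+1}} = \eta_{0}(\nu),
\end{align*}
which is exactly the claimed bound.

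I do not expect a genuine obstacle, since the argument is a standard spectral-mapping reduction followed by single-variable calculus. The only points demanding mild care are the degenerate case $\nu = 0$, where $\eta_{0}(0) = 1$ recovers $\lVert B(p)\rVert_2 \leq 1$ under the convention $0^0 = 1$, and the justification that the spectral norm equals the spectral radius, which rests solely on the symmetry of the product $B(p)\left(\Id - B(p)\right)^{\nu}$.
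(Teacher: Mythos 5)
Your argument is correct and follows essentially the same route as the paper's proof: exploit symmetry of $B(p)\left(\Id - B(p)\right)^{\nu}$, apply the spectral mapping to reduce the bound to $\max_{x \in [0,1]} x(1-x)^{\nu}$ uniformly in $p \in \I$, and maximize. You merely carry out explicitly the one-variable calculus and the $\nu = 0$ convention that the paper leaves implicit, so there is nothing to correct.
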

\begin{proof}
We follow the proof from~\cite[Lemma~$11.23$]{Hackbusch2016} using a \parameterdependent\ representation.

For fixed $p \in \I$ the matrix $B(p) ( \Id - B(p)  )^{\nu}$ is symmetric. 
Let $\lambda(p) \in \mathbb{R}_{+}$ denote an arbitrary eigenvalue of $B(p)$. 
Then $\lambda(p) ( 1 - \lambda(p) )^{\nu}$ is an eigenvalue of $B(p) ( \Id - B(p)  )^{\nu}$. 
Because $0 \leq B(p) \leq \Id$ holds, $0 \leq \lambda(p) \leq 1 $ and therefore $0 \leq \lambda(p) ( 1 - \lambda(p) )^{\nu} \leq 1$ follows. 
Consequently $B(p) ( \Id - B(p)  )^{\nu}$ is positive semi-definite with
\begin{align*}
\sup\limits_{p \in \I} \left\lVert B(p) \left( \Id - B(p)  \right)^{\nu} \right\rVert_2 & = \max\limits_{p \in \I} \left( \max\limits_{\lambda(p) \in  \sigma(B(p) )} \lambda(p) \left( 1 - \lambda(p) \right)^{\nu} \right) \\
&\leq \max\limits_{ \lambda \in \left[0, 1\right] }  \lambda \left( 1 - \lambda \right)^{\nu} = \eta_{0}(\nu).
\end{align*}
The last equality follows by maximizing the term and hence the lemma follows.
\end{proof}
As $\lim_{\nu \to \infty} \eta_{0}( \nu ) =0$ holds and $ \eta_{0}( \nu ) \leq \frac{1}{2 (\nu+1)}$ for $\nu>1$ holds, we can prove the smoothing property for the damped Richardson method and the damped Jacobi method in case of symmetric positive definite operators.
\begin{theorem}\label{th:smoothingRichardsonParameter}
Let $\omega(p) = \frac{c_0}{\rho(A_{\ell}(p))}$ for a constant $c_0 \in (0, 1]$, then
	\begin{align*}
		\left\lVert A_{\ell}(p) S_{\text{Rich,}\omega, \ell}^{\nu}(p) \right\rVert_2 \leq \frac{1}{2 c_0 \left(\nu + 1 \right)} \Vert A_{\ell}(p) \Vert_2  \quad \forall ~ \nu \in \N 
	\end{align*}
	holds for all $p \in \I$ and $\ell \in \N_{0}$.
\end{theorem}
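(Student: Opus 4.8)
The plan is to reduce the statement to Lemma~\ref{lemma:smoothingPropertyParameter} by an appropriate substitution, working pointwise in $p$. For each fixed $p \in \I$ I would set $B(p) \coloneqq \omega(p) A_{\ell}(p)$. Since $A_{\ell}(p)$ is assumed symmetric positive definite and $\omega(p) = c_0 / \rho(A_{\ell}(p)) > 0$, the matrix $B(p)$ is symmetric positive definite as well, so $0 \leq B(p) = B^{T}(p)$. To invoke the lemma I must additionally verify the upper bound $B(p) \leq \Id$: because $A_{\ell}(p)$ is symmetric, its spectral radius equals its largest eigenvalue, hence $\rho(B(p)) = \omega(p)\,\rho(A_{\ell}(p)) = c_0 \leq 1$, which gives $B(p) \leq \Id$. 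This check that the damping choice rescales the spectrum into $[0,1]$ is the only structural point, and it is immediate from the definition of $\omega(p)$.

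Next I would rewrite the quantity to be estimated in terms of $B(p)$. Since $S_{\text{Rich,}\omega, \ell}(p) = \Id - \omega(p) A_{\ell}(p) = \Id - B(p)$ and $A_{\ell}(p) = \omega(p)^{-1} B(p)$, it follows directly that
\[
A_{\ell}(p)\, S_{\text{Rich,}\omega, \ell}^{\nu}(p) = \frac{1}{\omega(p)}\, B(p) \bigl( \Id - B(p) \bigr)^{\nu}.
\]
Taking the spectral norm and applying Lemma~\ref{lemma:smoothingPropertyParameter} then bounds this quantity by $\omega(p)^{-1} \eta_{0}(\nu)$.

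It remains to convert $\omega(p)^{-1}$ back into $\lVert A_{\ell}(p) \rVert_2$ and to insert the explicit estimate for $\eta_{0}$. Since $A_{\ell}(p)$ is symmetric positive definite, $\lVert A_{\ell}(p) \rVert_2 = \rho(A_{\ell}(p))$, so $\omega(p)^{-1} = \rho(A_{\ell}(p))/c_0 = \lVert A_{\ell}(p) \rVert_2 / c_0$. Combining this with the bound $\eta_{0}(\nu) \leq \frac{1}{2(\nu+1)}$ noted before the theorem (which holds with equality at $\nu = 1$ and strictly beyond, hence throughout $\N$) yields
\[
\bigl\lVert A_{\ell}(p)\, S_{\text{Rich,}\omega, \ell}^{\nu}(p) \bigr\rVert_2 \leq \frac{1}{c_0}\, \eta_{0}(\nu)\, \lVert A_{\ell}(p) \rVert_2 \leq \frac{1}{2 c_0 (\nu+1)}\, \lVert A_{\ell}(p) \rVert_2,
\]
as claimed.

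I do not expect a genuine obstacle here: the argument is entirely pointwise in $p$ and relies only on the spectral identity $\lVert A_{\ell}(p) \rVert_2 = \rho(A_{\ell}(p))$ for symmetric positive definite matrices together with the lemma. The one place demanding a little care is ensuring $B(p) \leq \Id$ for \emph{every} $p$ simultaneously, but this holds uniformly because the normalization $\omega(p) = c_0 / \rho(A_{\ell}(p))$ forces $\rho(B(p)) = c_0 \leq 1$ regardless of $p$ and $\ell$, so no parameter- or level-dependent constant enters the bound.
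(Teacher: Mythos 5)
Your proof is correct and follows essentially the same route as the paper: both substitute $B(p) = \omega(p) A_{\ell}(p)$ into Lemma~\ref{lemma:smoothingPropertyParameter}, pull out the factor $\omega(p)^{-1}$, and convert it back via $\omega(p)^{-1} = \rho(A_{\ell}(p))/c_0 \leq \lVert A_{\ell}(p) \rVert_2 / c_0$ together with $\eta_0(\nu) \leq \frac{1}{2(\nu+1)}$. If anything, you are slightly more careful than the paper, since you explicitly verify $0 \leq B(p) \leq \Id$ (via $\rho(B(p)) = c_0 \leq 1$) and note that the $\eta_0$ bound holds with equality at $\nu = 1$, points the paper leaves implicit.
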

\begin{proof}
With Definition~\ref{def:dampedRichardsonMethod} and Lemma~\ref{lemma:smoothingPropertyParameter} we calculate
\begin{equation*}
	\begin{aligned}
	& \left\lVert  A_{\ell}(p) S_{\text{Rich,}\omega, \ell}^{\nu}(p) \right\rVert_2 = \frac 1{\omega(p)} \left\lVert \omega(p) A_{\ell}(p) S_{\text{Rich,}\omega, \ell}^{\nu}(p) \right\rVert_2 \\
	= & \frac 1{\omega(p)} \left\lVert \omega(p) A_{\ell}(p)  \left(\Id - \omega(p) A_{\ell}(p)\right)^{\nu} \right\rVert_2 \leq  \frac{1}{2 \omega(p) ( \nu + 1 )} \\
\leq &  \frac{1}{2 c_0 \left( \nu + 1 \right)} \rho( A_{\ell}(p) )\leq \frac{1}{2 c_0 \left( \nu + 1 \right)} \left\lVert A_{\ell}(p) \right\rVert_2.
	\end{aligned}
\end{equation*}
As $\rho ( S_{\text{Rich,}\omega, \ell}(p) ) < 1$ holds for all $p \in \mathcal{I}$, the method converges and hence $\overline{\nu}(h)= \infty$ follows. 
\end{proof}
Therefore the damped Richardson method~\ref{def:dampedRichardsonMethod} fulfills the smoothing property.
\begin{theorem}\label{th:smoothingJacobiParameter}
Let $\omega(p) \in (0, \rho( D_{\ell}^{-1}(p) A_{\ell}(p) )^{-1} ]$, then
	\begin{align*}
		\left\lVert A_{\ell}(p) S_{\text{Jac,}\omega, \ell}^{\nu}(p) \right\rVert_2 \leq \frac{1}{ 2 \omega(p) \left(\nu + 1 \right)} ~ \lVert A_{\ell}(p) \rVert_2 \quad \forall ~ \nu \in \N 
	\end{align*}
	holds for all $p \in \I$ and $\ell \in \N_{0}$.
\end{theorem}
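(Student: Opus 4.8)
The plan is to mirror the structure of the proof of Theorem~\ref{th:smoothingRichardsonParameter}, reducing everything to the symmetric situation covered by Lemma~\ref{lemma:smoothingPropertyParameter}. The genuinely new difficulty is that the Jacobi iteration matrix $S_{\mathrm{Jac},\omega,\ell}(p)=\Id-\omega(p)D_\ell^{-1}(p)A_\ell(p)$ from Definition~\ref{def:dampedJacobiMethodParameter} contains the product $D_\ell^{-1}(p)A_\ell(p)$, which is in general \emph{not} symmetric, so the lemma cannot be applied to it directly. I would first symmetrize using the diagonal factor: since $A_\ell(p)$ is symmetric positive definite, its diagonal $D_\ell(p)=\diag(A_\ell(p))$ has strictly positive entries, hence $D_\ell^{1/2}(p)$ and $D_\ell^{-1/2}(p)$ are well defined, and I set $B(p)\coloneqq \omega(p)\,D_\ell^{-1/2}(p)A_\ell(p)D_\ell^{-1/2}(p)$, which is symmetric.

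Second, I would verify the hypothesis $0\le B(p)\le \Id$ of Lemma~\ref{lemma:smoothingPropertyParameter}. Positive semidefiniteness is immediate since $B(p)$ is a congruence transform of the positive definite $A_\ell(p)$. For the upper bound I would use that $D_\ell^{-1/2}(p)A_\ell(p)D_\ell^{-1/2}(p)$ is similar to $D_\ell^{-1}(p)A_\ell(p)$ (conjugate by $D_\ell^{1/2}(p)$) and therefore has the same spectral radius; the assumption $\omega(p)\le \rho(D_\ell^{-1}(p)A_\ell(p))^{-1}$ then yields $\lambda_{\max}(B(p))=\omega(p)\,\rho(D_\ell^{-1}(p)A_\ell(p))\le 1$.

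Third, I would express the object of interest through $B(p)$. From the definitions one reads off the similarity $S_{\mathrm{Jac},\omega,\ell}(p)=D_\ell^{-1/2}(p)(\Id-B(p))D_\ell^{1/2}(p)$ together with $A_\ell(p)=\omega(p)^{-1}D_\ell^{1/2}(p)B(p)D_\ell^{1/2}(p)$, so that
\[
\omega(p)\,A_\ell(p)\,S_{\mathrm{Jac},\omega,\ell}^{\nu}(p)=D_\ell^{1/2}(p)\,B(p)(\Id-B(p))^{\nu}\,D_\ell^{1/2}(p).
\]
Lemma~\ref{lemma:smoothingPropertyParameter} now applies to the symmetric middle factor and gives $\lVert B(p)(\Id-B(p))^{\nu}\rVert_2\le \eta_0(\nu)\le \tfrac{1}{2(\nu+1)}$.

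Finally, I would pass from this congruence back to a bound in terms of $\lVert A_\ell(p)\rVert_2$. The right-hand side is symmetric positive semidefinite, so for every $x$,
\[
x^{\top}D_\ell^{1/2}(p)B(p)(\Id-B(p))^{\nu}D_\ell^{1/2}(p)x\le \tfrac{1}{2(\nu+1)}\,x^{\top}D_\ell(p)\,x\le \tfrac{\lVert D_\ell(p)\rVert_2}{2(\nu+1)}\,\lVert x\rVert_2^2,
\]
whence $\lVert \omega(p)A_\ell(p)S_{\mathrm{Jac},\omega,\ell}^{\nu}(p)\rVert_2\le \tfrac{1}{2(\nu+1)}\lVert D_\ell(p)\rVert_2$. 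The step I expect to be the crux, and which has no analogue in the Richardson case, is then controlling $\lVert D_\ell(p)\rVert_2$ by $\lVert A_\ell(p)\rVert_2$: because $A_\ell(p)$ is symmetric positive definite, each diagonal entry obeys $(A_\ell(p))_{ii}=e_i^{\top}A_\ell(p)e_i\le \lambda_{\max}(A_\ell(p))=\lVert A_\ell(p)\rVert_2$, so $\lVert D_\ell(p)\rVert_2=\max_i (A_\ell(p))_{ii}\le \lVert A_\ell(p)\rVert_2$. Dividing by $\omega(p)$ and inserting this estimate gives the assertion for every fixed $p\in\I$ and $\ell\in\N_0$; as in Theorem~\ref{th:smoothingRichardsonParameter}, one moreover checks $\rho(S_{\mathrm{Jac},\omega,\ell}(p))<1$, so the iteration converges and $\overline{\nu}(h)=\infty$.
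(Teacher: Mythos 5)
Your proof is correct and takes essentially the same route as the paper: symmetrize via $D_{\ell}^{\pm 1/2}(p)$ to $\omega(p)\,D_{\ell}^{-1/2}(p)A_{\ell}(p)D_{\ell}^{-1/2}(p)$, apply Lemma~\ref{lemma:smoothingPropertyParameter}, and finish with $\lVert D_{\ell}(p)\rVert_2 \leq \lVert A_{\ell}(p)\rVert_2$. The only differences are cosmetic: you replace the paper's direct submultiplicativity step $\frac{1}{\omega(p)}\lVert D_{\ell}^{1/2}(p)\rVert_2^2\,\lVert \omega(p)\tilde{A}_{\ell}(p)(\Id-\omega(p)\tilde{A}_{\ell}(p))^{\nu}\rVert_2$ by an equivalent quadratic-form argument, and you spell out the similarity argument for $\rho(D_{\ell}^{-1}(p)A_{\ell}(p))$ and the diagonal bound $(A_{\ell}(p))_{ii}\leq\lambda_{\max}(A_{\ell}(p))$ that the paper leaves implicit.
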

\begin{proof}
As $A_{\ell}(p)$ is symmetric positive definite for all $p \in \mathcal{I}$, its diagonal satisfies $D_{\ell}(p) > 0$. 
Therefore we can define $\tilde{A}_{\ell}(p) \coloneqq D_{\ell}^{-\frac{1}{2}}(p) A_{\ell}(p) D_{\ell}^{-\frac{1}{2}}(p)$.
As now $0 \leq \omega(p) \tilde{A}_{\ell}(p) \leq \Id$ holds, together with Definition~\ref{def:dampedJacobiMethodParameter} and Lemma~\ref{lemma:smoothingPropertyParameter}, we calculate
	\begin{align*}
&\lVert A_{\ell}(p) S_{\text{Jac,}\omega, \ell}^{\nu}(p)\rVert_2 = \lVert A_{\ell}(p) ( \Id - \omega(p) D_{\ell}^{-1}(p) A_{\ell}(p) )^{\nu} \rVert_2 \\
		  \leq & \frac{1}{\omega(p)} \lVert D_{\ell}^{\frac{1}{2}}(p) \rVert_2^2 \lVert \omega(p) \tilde{A}_{\ell}(p) \left( \Id - \omega(p) \tilde{A}_{\ell}(p) \right)^{\nu} \rVert_2\\
		 \leq & \frac{1}{2 \omega(p) \left(\nu + 1 \right) } \left\lVert D_{\ell}(p) \right\rVert_2 \leq \frac{1}{2 \omega(p) \left(\nu + 1 \right) } \left\lVert A_{\ell}(p) \right\rVert_2	~.
	\end{align*}
As $\rho ( S_{\text{Jac,}\omega, \ell}(p) ) < 1$ holds for all $p \in \mathcal{I}$, the method converges and hence $\overline{\nu}(h)= \infty$ follows. 
\end{proof}
Therefore the damped Jacobi method~\ref{def:dampedJacobiMethodParameter} fulfills the smoothing property.

We remark that one could choose $\omega = \min_{p \in \I} \omega(p)$ to obtain a damping factor independent of $p$ in Theorems~\ref{th:smoothingRichardsonParameter} and~\ref{th:smoothingJacobiParameter}.
Although this choice guarantees the convergence of both methods for all $p \in \I$, it can be suboptimal for most $p$, e.g., if the value of $\omega(p)$ varies by orders of magnitude.
This complicates the choice of a uniform optimal damping factor for the Richardson method and is a disadvantage compared to the Jacobi method, where the multiplication of the operator with its inverse diagonal scales the value range of $\omega(p)$.
	\subsection{Approximation property}
We define the approximation property for \parameterdependent\ problems analogously to the classic definition in~\cite[$11.6.3.1$]{Hackbusch2016}.
\begin{definition}[Approximation property]\label{def:approximationPropertyParameter}
The \emph{approximation property} is given by
\begin{align*}
\left\lVert A_{\ell}^{-1}(p) - P_{\ell}(p) A_{\ell-1}^{-1}(p) R_{\ell}(p) \right\rVert_2 \leq \frac{C_{A}}{ \left\lVert A_{\ell}(p) \right\rVert_2 } \quad \forall ~ \ell \in \N
\end{align*}
with a constant $C_A > 0$ independent of $\ell$.
\end{definition}
The approximation property assures that the error on the coarse grid is a good approximation of the error on the fine grid.
Using Definition~\ref{def:approximationPropertyParameter} to verify the approximation property involves the calculation of the \parameterdependent\ inverse. 
The computational effort makes this infeasible for $d>2$.
One would thus like to have a theorem, which allows the proof of the approximation property in the \parameterdependent\ case, involving only the operators themselves and not their inverses. 

As a first idea, we propose the following ansatz by Hackbusch~\cite[$11.6.3.1$]{Hackbusch2016}.
Let the \Galerkinansatz\
\begin{align*}
	A_{\ell-1}(p) = R_{\ell}(p) A_{\ell}(p) P_{\ell}(p)
\end{align*}
hold for all $p\in \I$, $\ell \in \N$ and let further $p \in \mathcal{I}$ fixed but arbitrary. 
For an arbitrary restriction $R'(p) : X_{\ell} \to X_{\ell-1}$ the following factorization holds
	\begin{align*}
		A_{\ell}^{-1}(p) - P_{\ell}(p)  A_{\ell-1}^{-1}(p)  R_{\ell}(p) =& \left[ \Id - P_{\ell}(p)  A_{\ell-1}^{-1}(p)  R_{\ell}(p)  A_{\ell}(p)  \right] A_{\ell}^{-1}(p)  \\
		= &\left[ \Id - P_{\ell}(p)  A_{\ell-1}^{-1}(p)  R_{\ell}(p)  A_{\ell}(p)  \right] \left[ \Id - P_{\ell}(p) R'(p) \right] A_{\ell}^{-1}(p)  .
	\end{align*}
If the solution $u_{\ell}(p) \coloneqq A_{\ell}^{-1}(p)  f_{\ell}$ is sufficiently smooth, e.g., discrete regular, the interpolation error
\begin{align*}
		d_{\ell}(p)  = \left[ \Id - P_{\ell}(p) R'(p) \right] u_{\ell}(p)  = u_{\ell}(p)  - P_{\ell}(p) R'(p)  u_{\ell}(p) 
\end{align*}
can be estimated by $\lVert d_{\ell}(p)  \rVert_2 \leq C \frac{\lVert f_{\ell} \rVert_2}{\lVert A_{\ell}(p) \rVert_2}$ with $C > 0$ independent of $\ell$. 
The same argument can be used to show
\begin{align*}
		\left\lVert \Id - P_{\ell}(p)  A_{\ell-1}^{-1}(p)  R_{\ell}(p)  A_{\ell}(p)  \right\rVert_2 \leq C,
\end{align*}
which implies the approximation property. 
Following this idea, using the \Galerkinansatz, one can prove the next result under some additional requirements.
\begin{theorem}\label{th:galerkinAnsatz}
Let the \emph{\Galerkinansatz}
\begin{align*}
	A_{\ell-1}(p) = R_{\ell}(p) A_{\ell}(p) P_{\ell}(p)
\end{align*}
hold for all $p\in \I$ and $\ell \in \N$, where one chooses $R_{\ell}(p)$ and $P_{\ell}(p)$ as the canonical restriction and prolongation for all $p\in \I$ and $\ell \in \N$. 
We assume that for $m\in \N$ with constants $C_E$, $C_K$, $C_P$ and $C_h > 0$ independent of $\ell$
\begin{equation*}
\begin{aligned}
\lVert A^{-1}(p) - P_{\ell}(p) A_{\ell}^{-1}(p) R_{\ell}(p) \rVert & \leq C_{E} h^m_{\ell}~,\\
\lVert  A_{\ell}(p) \rVert_2 & \leq C_{K} h^{-2m}_{\ell} ~,\\
\lVert  P_{\ell}(p) \left(R_{\ell}(p) P_{\ell}(p) \right)^{-1} \rVert & \leq C_{P}~,\\
\lVert \left(R_{\ell}(p) P_{\ell}(p) \right)^{-1}  R_{\ell}(p)  \rVert & \leq C_{P} ~,\\
\text{and } h_{\ell-1} & \leq C_{h} h_{\ell}
\end{aligned}
\end{equation*}
hold, then the approximation property is satisfied.
\end{theorem}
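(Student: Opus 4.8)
The plan is to follow the factorization sketched immediately before the statement, reduce the approximation property to the two scaling hypotheses (the consistency bound $C_E h_\ell^m$ and the norm bound $C_K h_\ell^{-2m}$), and use the \Galerkinansatz\ together with the $C_P$\nobreakdash- and $C_h$\nobreakdash-estimates to keep the grid-transfer operators under control. Throughout, $p \in \I$ is fixed but arbitrary; the only thing that ultimately matters is that every constant produced is independent of both $\ell$ and $p$, which holds automatically because each hypothesis is assumed uniformly over $\I$.

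First I would strip the factor $\lVert A_\ell(p)\rVert_2$ out of the target. Since $\lVert A_\ell(p)\rVert_2 \leq C_K h_\ell^{-2m}$, it suffices to find a constant $\tilde C$, independent of $\ell$ and $p$, with $\lVert A_\ell^{-1}(p) - P_\ell(p) A_{\ell-1}^{-1}(p) R_\ell(p)\rVert_2 \leq \tilde C\, h_\ell^{2m}$; multiplying by $\lVert A_\ell(p)\rVert_2$ and using $h_\ell^{2m}\lVert A_\ell(p)\rVert_2 \leq C_K$ then yields the claimed bound with $C_A \coloneqq \tilde C\, C_K$. This reduces everything to a discretization-type error estimate of order $h_\ell^{2m}$ for the \twogrid\ inverse.

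Second, I would invoke the factorization that the \Galerkinansatz\ makes available for an arbitrary auxiliary restriction $R'$,
\[
A_\ell^{-1}(p) - P_\ell(p) A_{\ell-1}^{-1}(p) R_\ell(p) = \left[\Id - P_\ell(p) A_{\ell-1}^{-1}(p) R_\ell(p) A_\ell(p)\right]\left[\Id - P_\ell(p) R'\right] A_\ell^{-1}(p),
\]
and estimate the two factors separately. For the choice $R' \coloneqq (R_\ell(p) P_\ell(p))^{-1} R_\ell(p)$ the operator $\Id - P_\ell(p) R'$ becomes the complementary projection onto $\operatorname{range} P_\ell(p)$, whose relevant norms are exactly the quantities bounded by $C_P$ in the two middle hypotheses; the same $C_P$\nobreakdash-estimates, combined with $A_{\ell-1}(p) = R_\ell(p) A_\ell(p) P_\ell(p)$, bound the first factor by a constant. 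The remaining work is the interpolation-error factor $[\Id - P_\ell(p) R']A_\ell^{-1}(p)$: writing $A_\ell^{-1}(p)$ against the exact inverse $A^{-1}(p)$ and using that $\Id - P_\ell(p) R'$ annihilates $\operatorname{range} P_\ell(p)$, I would bound it through the consistency hypothesis applied at both levels $\ell$ and $\ell-1$, invoking $h_{\ell-1} \leq C_h h_\ell$ to re-express the coarse-grid contribution in powers of $h_\ell$.

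The main obstacle is this last estimate, and specifically the orders. The consistency hypothesis is stated against the exact inverse $A^{-1}(p)$ through the canonical level-$\ell$ transfer operators, whereas the approximation property is phrased with the intergrid operators $P_\ell(p)\colon X_{\ell-1}\to X_\ell$ and $R_\ell(p)\colon X_\ell \to X_{\ell-1}$; reconciling the two families and, above all, combining the level-$\ell$ and level-$(\ell-1)$ consistency errors so that they yield the full order $h_\ell^{2m}$ demanded by the reduction — rather than only the single power $h_\ell^m$ that a naive triangle inequality would give — is the delicate point. It is precisely here that the discrete regularity of the solution is used and that the $C_P$\nobreakdash- and $C_h$\nobreakdash-estimates are consumed: a purely additive split is insufficient, so the product structure of the factorization must be exploited to recover the squared order. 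Once the $h_\ell^{2m}$ bound is secured, the reduction closes the argument with $C_A = \tilde C C_K$, uniformly in $p \in \I$.
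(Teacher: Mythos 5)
Your first two steps are sound and match the sketch the paper gives just before the theorem: the reduction to showing $\lVert A_{\ell}^{-1}(p) - P_{\ell}(p) A_{\ell-1}^{-1}(p) R_{\ell}(p)\rVert_2 \leq \tilde{C} h_{\ell}^{2m}$ (so that $C_A = \tilde{C} C_K$) is correct, and the choice $R'(p) = \left(R_{\ell}(p)P_{\ell}(p)\right)^{-1}R_{\ell}(p)$, which makes $\Id - P_{\ell}(p)R'(p)$ a projection annihilating $\operatorname{range} P_{\ell}(p)$, is the right skeleton. But there is a genuine gap, and you name it yourself in your last paragraph: the decisive estimate $\lVert \left[\Id - P_{\ell}(p)R'(p)\right]A_{\ell}^{-1}(p)\rVert \leq \tilde{C} h_{\ell}^{2m}$ is never proved. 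As you observe, an additive comparison with $A^{-1}(p)$ via the consistency hypothesis delivers only order $h_{\ell}^{m}$, which after multiplication by $\lVert A_{\ell}(p)\rVert_2 \leq C_K h_{\ell}^{-2m}$ gives a bound of size $h_{\ell}^{-m}$ that diverges; your resolution is the assertion that ``the product structure of the factorization must be exploited to recover the squared order.'' That assertion is not an argument --- the doubling of the order is the entire analytic content of the theorem (in Hackbusch's proof it comes from applying the consistency estimate on both sides of the defect, a duality-type mechanism exploiting the symmetry of $A_{\ell}(p)$, with the $C_P$- and $C_h$-bounds controlling the grid transfers), and nothing in your write-up supplies it. A second, smaller gap: the uniform bound $\lVert \Id - P_{\ell}(p) A_{\ell-1}^{-1}(p) R_{\ell}(p) A_{\ell}(p)\rVert_2 \leq C$ does not follow from the two $C_P$-hypotheses together with the \Galerkinansatz\ alone, since those hypotheses control $P_{\ell}(p)(R_{\ell}(p)P_{\ell}(p))^{-1}$ and $(R_{\ell}(p)P_{\ell}(p))^{-1}R_{\ell}(p)$ but say nothing about $A_{\ell-1}^{-1}(p)$ against $A_{\ell}(p)$; bounding this factor requires the same regularity/consistency argument again, which is exactly what the paper's phrase ``the same argument can be used to show'' alludes to in its informal discussion.

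For comparison, the paper does not carry out this argument at all: its proof is two lines, namely that for each fixed $p \in \I$ the statement is precisely \cite[Theorem~$11.34$]{Hackbusch2016}, and since all hypothesized constants $C_E$, $C_K$, $C_P$, $C_h$ are assumed independent of $p$ (and $\ell$), the resulting $C_A$ is uniform over $\I$, which is the parameter-dependent approximation property. So the genuinely new content in the parameter-dependent setting is only the uniformity observation you already made in your opening paragraph; the honest fix for your proposal is either to close the two estimates above (in effect reproving Hackbusch's theorem, including the order-doubling step) or to invoke that theorem pointwise in $p$ as the paper does.
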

\begin{proof}
For fixed $p\in \I$ the theorem follows from~\cite[Theorem $11.34$]{Hackbusch2016}.
Because we choose $p \in \I$ arbitrary, the theorem also holds in the \parameterdependent\ case.
\end{proof}
	\subsection{Convergence}
By proving the smoothing and approximation properties one obtains the convergence of the \twogrid\ method, presented in the following theorem.
\begin{theorem}\label{th:convergenceTwoGrid}
Assume the smoothing property of Definition~\ref{def:smoothingPropertyParameter} with $\overline{\nu}(h)= \infty$ and the approximation property of Definition~\ref{def:approximationPropertyParameter} are fulfilled. 
For a given $0 < \zeta < 1$, there exists a lower bound $\underline{\nu} \in \N_0$, such that
	\begin{align*}
		\left\lVert M_{\ell}^{\operatorname{TGM}(\nu,0)}(p) \right\rVert_2 \leq C_{A} \eta(\nu) \leq \zeta
	\end{align*}
holds for all $\nu \geq \underline{\nu}$, $\ell \in \N$ and $p \in \mathcal{I}$.
\end{theorem}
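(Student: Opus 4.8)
The plan is to realize the displayed submultiplicative splitting that motivated the whole section and then feed the two structural properties into it. First I would rewrite the two-grid iteration matrix using the algebraic identity
\begin{align*}
\Id - P_{\ell}(p) A_{\ell-1}^{-1}(p) R_{\ell}(p) A_{\ell}(p)
= \left( A_{\ell}^{-1}(p) - P_{\ell}(p) A_{\ell-1}^{-1}(p) R_{\ell}(p) \right) A_{\ell}(p),
\end{align*}
which is valid because $A_{\ell}(p)$ is invertible (it is symmetric positive definite by assumption). With $\nu_1=\nu$ and $\nu_2=0$ this gives
\begin{align*}
M_{\ell}^{\operatorname{TGM}(\nu,0)}(p)
= \left( A_{\ell}^{-1}(p) - P_{\ell}(p) A_{\ell-1}^{-1}(p) R_{\ell}(p) \right) A_{\ell}(p) S_{\ell}^{\nu}(p).
\end{align*}

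Next I would apply submultiplicativity of the spectral norm to separate the approximation factor from the smoothing factor:
\begin{align*}
\left\lVert M_{\ell}^{\operatorname{TGM}(\nu,0)}(p) \right\rVert_2
\leq \left\lVert A_{\ell}^{-1}(p) - P_{\ell}(p) A_{\ell-1}^{-1}(p) R_{\ell}(p) \right\rVert_2 \,
\left\lVert A_{\ell}(p) S_{\ell}^{\nu}(p) \right\rVert_2.
\end{align*}
Now the approximation property (Definition~\ref{def:approximationPropertyParameter}) bounds the first factor by $C_A / \lVert A_{\ell}(p) \rVert_2$, while the smoothing property (Definition~\ref{def:smoothingPropertyParameter}) bounds the second factor by $\eta(\nu) \lVert A_{\ell}(p) \rVert_2$. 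Here the hypothesis $\overline{\nu}(h)=\infty$ is exactly what guarantees that the smoothing estimate is available for \emph{every} $\nu \in \N$ rather than only up to some level-dependent threshold. Multiplying these two bounds, the factor $\lVert A_{\ell}(p) \rVert_2$ cancels, yielding $\lVert M_{\ell}^{\operatorname{TGM}(\nu,0)}(p) \rVert_2 \leq C_A \eta(\nu)$ uniformly in $\ell$ and $p$, since $C_A$ is independent of $\ell$ and the function $\eta$ is independent of both $\ell$ and $p$.

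Finally, to obtain the threshold $\underline{\nu}$, I would invoke $\lim_{\nu \to \infty} \eta(\nu) = 0$ from the smoothing property: given $0 < \zeta < 1$, there exists $\underline{\nu} \in \N_0$ with $C_A \eta(\nu) \leq \zeta$ for all $\nu \geq \underline{\nu}$, which closes the chain. I do not expect a genuine obstacle here; the argument is the parameter-dependent transcription of the classical two-grid convergence proof. The only point requiring care is tracking \emph{uniformity}: one must check that neither $C_A$ nor $\eta$ introduces hidden dependence on $p$ or $\ell$, so that the single threshold $\underline{\nu}$ works simultaneously for all parameters in $\I$ and all levels. This is precisely why the two properties were formulated with constants independent of $\ell$ and $p$, and it is the feature that distinguishes the parameter-dependent result from simply applying the classical theorem pointwise in $p$.
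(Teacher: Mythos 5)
Your proof is correct and takes essentially the same route as the paper: the identical factorization $M_{\ell}^{\operatorname{TGM}(\nu,0)}(p) = \left[ A_{\ell}^{-1}(p) - P_{\ell}(p) A_{\ell-1}^{-1}(p) R_{\ell}(p) \right] \left[ A_{\ell}(p) S_{\ell}^{\nu}(p) \right]$, followed by submultiplicativity, the two properties, cancellation of $\lVert A_{\ell}(p) \rVert_2$, and $\lim_{\nu \to \infty} \eta(\nu) = 0$ to choose $\underline{\nu}$. Your explicit tracking of uniformity in $p$ and $\ell$ only spells out what the paper's terser proof leaves implicit, and correctly identifies the role of $\overline{\nu}(h) = \infty$.
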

\begin{proof}
We can factorize the \twogrid\ iteration matrix via
\begin{align*}
	M_{\ell}^{\operatorname{TGM}(\nu,0)}(p) = & \left[ \Id- P_{\ell}(p) A^{-1}_{\ell-1}(p) R_{\ell}(p) A_{\ell}(p) \right] S_{\ell}^{\nu}(p)\\
	= & \left[ A^{-1}_{\ell}(p) - P_{\ell}(p) A^{-1}_{\ell-1}(p) R_{\ell}(p) \right] \left[ A_{\ell}(p) S_{\ell}^{\nu}(p) \right].
\end{align*}
Using the smoothing property and the approximation property we directly obtain the result. 
\end{proof}
We have thus proved the convergence of the \twogrid\ method for \parameterdependent\ problems. 
Using the convergence of the \twogrid\ method, one obtains the multigrid convergence with help of some weak additional assumptions, similarly to the classical case, cf., e.g.,~\cite[Theorem $11.42$]{Hackbusch2016}.
\section{Representation of \parameterdependent\ problems}\label{sec:repParameterProblems}
Because of our \parameterdependent\ multigrid theory we now introduce representations of the operator, the solution, the \righthandside, the smoother, the prolongation and the restriction in a \parameterdependent\ way, such that we can perform arithmetic operations with them.

One possible way for the representation of \parameterdependent\ problems are \lowrank\ tensor formats, cf., e.g.,~\cite{Ballani2013,Grasedyck2017,Kressner2011}.
To illustrate the idea, we assume that the parameter dependency is a scaling of a given operator $A$, i.e., $\sigma(\nu) A$ and that the \righthandside\ $f$ is constant for all $\sigma(\nu)$.
With classical methods we would have to solve the following linear system.
\begin{align*}
	\begin{pmatrix}
			\sigma(1) A & 0 & \hdots & 0 \\
			0 & \sigma(2) A & \ddots & \vdots \\
			\vdots & \ddots & \ddots & 0 \\
			0 & \hdots  & 0 & \sigma(n) A
	\end{pmatrix}
	\begin{pmatrix}
			u(\sigma(1)) \\
			u(\sigma(2)) \\
			\vdots \\
			u( \sigma(n))
	\end{pmatrix}
	=
	\begin{pmatrix}
			f \\
			f \\
			\vdots \\
			f
	\end{pmatrix}.
\end{align*} 
If we model this system using the Kronecker product
\begin{align*}
	\left( \diag(\sigma(1), \sigma(2), \dots, \sigma(n)) \otimes A \right) u(\sigma) = \left(1, \cdots, 1\right)^T \otimes f ~,
\end{align*}
we derive a data-sparse representation. 
We now generalize the above representation for the case of more than one parameter.
	\subsection{Operator}
For a one-dimensional geometry, equation~\eqref{eq:cookie} reads
\begin{equation} \label{eq:cookie1D} 
	\begin{aligned}
	-\frac{\partial}{\partial x} \left( \sigma(x,p) \frac{\partial}{\partial x} u(x,p) \right) & = f(x) \quad &&\text{ in } \Omega, \\
	u(x,p) & = 0 \quad &&\text{ on } \partial\Omega.
	\end{aligned}
\end{equation}
Let $n \in \N$.
We denote the grid size by $h \coloneqq \frac 1n > 0$, the grid points by $x_i$ with $i \in \lbrace 0,\dots, n \rbrace$, and the discrete diffusion at grid point $x_i$ by $\sigma_{i}$.
For ease of presentation, we consider only finite difference discretization and therefore assume that $u(x,p) \in C^4$ and $\sigma(x,p) \in C^1$ for $x \in \Omega$. 
A possible future work could be the generalization of the theoretical results, e.g., to the finite element method, where one has weaker requirements of regularity.
\begin{theorem}\label{th:cookie1DStencil}
For equation~\eqref{eq:cookie1D} a second-order consistent stencil is given by
\begin{align*}
	\frac{1}{h^2}
		\begin{bmatrix}
			- \frac{\sigma_{i-1} + \sigma_{i}}{2} & \frac{ \sigma_{i-1} + 2 \sigma_{i} + \sigma_{i+1} }{2} & -\frac{ 	\sigma_{i} + \sigma_{i+1} }{2}
		\end{bmatrix}.
\end{align*}
\end{theorem}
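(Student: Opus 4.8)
The plan is to read the stencil as a conservative (flux\hyp{}difference) discretization and then certify its order by a Taylor expansion about the grid point $x_i$. First I would rewrite the operator in flux form $Lu = -\tfrac{\partial}{\partial x} q$ with flux $q(x)\coloneqq \sigma(x,p)\,\tfrac{\partial}{\partial x}u(x,p)$, and approximate this flux at the cell midpoints $x_{i\pm1/2}\coloneqq x_i\pm\tfrac h2$ by a central difference of $u$ weighted with an arithmetic average of the diffusion,
\[
q_{i+1/2}\approx \frac{\sigma_i+\sigma_{i+1}}{2}\cdot\frac{u_{i+1}-u_i}{h},
\qquad
q_{i-1/2}\approx \frac{\sigma_{i-1}+\sigma_i}{2}\cdot\frac{u_i-u_{i-1}}{h}.
\]
Discretizing $Lu|_{x_i}$ by the flux difference $-(q_{i+1/2}-q_{i-1/2})/h$ and collecting the coefficients of $u_{i-1},u_i,u_{i+1}$ reproduces exactly the claimed stencil; this algebraic identification is routine and simply shows that the stencil \emph{is} this scheme.

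The second step is to establish second\hyp{}order consistency, i.e.\ to show that applying the stencil to the exact solution yields $Lu(x_i,p)+\mathcal O(h^2)$. Writing the stencil as $\tfrac1{h^2}\big[A(u_i-u_{i-1})-B(u_{i+1}-u_i)\big]$ with $A\coloneqq\tfrac12(\sigma_{i-1}+\sigma_i)$ and $B\coloneqq\tfrac12(\sigma_i+\sigma_{i+1})$, I would split off the diffusion value at $x_i$ via $A=\sigma_i+(A-\sigma_i)$, $B=\sigma_i+(B-\sigma_i)$. The $\sigma_i$\hyp{}part collapses to the standard second difference $-\sigma_i\,(u_{i+1}-2u_i+u_{i-1})=-h^2\sigma_i u''(x_i)+\mathcal O(h^4)$, while Taylor\hyp{}expanding $\sigma_{i\pm1}$ and $u_{i\pm1}$ about $x_i$ turns the remaining cross terms into $-h^2\sigma'(x_i)u'(x_i)+(\text{higher order})$. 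Dividing by $h^2$ gives the leading term $-(\sigma u''+\sigma'u')(x_i)=-\tfrac{\partial}{\partial x}\!\big(\sigma\,\tfrac{\partial}{\partial x}u\big)(x_i)=Lu(x_i,p)$, as required.

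The crux, and the step I expect to be most delicate, is arguing that the remainder is genuinely $\mathcal O(h^2)$ and not merely $\mathcal O(h)$. Here the symmetry of the averaging and of the central differences is essential: the first\hyp{}order ($h^1$) and third\hyp{}order ($h^3$) contributions in $A(u_i-u_{i-1})-B(u_{i+1}-u_i)$ cancel pairwise, so the leading error sits at order $h^4$ and, after division by $h^2$, becomes $\mathcal O(h^2)$. The $u$\hyp{}dependent cancellations are controlled by $u\in C^4$, whereas the $\sigma$\hyp{}dependent ones rely on the symmetric differencing of $\sigma_{i\pm1}$; making the order of the $\sigma$\hyp{}terms rigorous — in particular the odd\hyp{}order contribution carrying derivatives of $\sigma$ — is the point at which one must track the available smoothness of $\sigma$ carefully. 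Once these remainder estimates are in place, the second\hyp{}order consistency of the stencil follows.
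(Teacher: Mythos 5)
Your proposal is correct and takes essentially the same route as the paper: the paper's proof likewise writes the scheme in flux-difference form with undetermined midpoint coefficients $\tilde{\sigma}_i$, applies Taylor's theorem about $x_i$, and equates coefficients against $(Au)_i = -\left( \sigma'(x_i,p)\,u_i' + \sigma(x_i,p)\,u_i'' \right)$, which yields $\tilde{\sigma}_i = \frac{\sigma_{i-1}+\sigma_i}{2}$ --- exactly the arithmetic-average flux you start from. Your explicit remainder analysis (the pairwise cancellation of odd-order terms, requiring $u \in C^4$ and, as you rightly flag, more smoothness of $\sigma$ than the paper's stated $C^1$ to make the $\mathcal{O}(h^2)$ bound rigorous) is in fact more detailed than the paper's one-line coefficient-matching argument, which leaves those estimates implicit.
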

\begin{proof}
Taylor's theorem and equating the coefficients of
\begin{align*}
(Au)_i & = - \left( \sigma'(x_i,p) u'_i + \sigma(x_i,p) u''_i \right) \quad \text{ and, }\\
(A_h u_h)_i & = - \frac{1}{h^2} \left( -\tilde{\sigma}_{i} u_{i-1} + (\tilde{\sigma}_{i} + \tilde{\sigma}_{i+1}) u_{i} - \tilde{\sigma}_{i+1} u_{i+1} \right),
\end{align*}
yields $\tilde{\sigma}_i = \frac{\sigma_{i-1}+\sigma_{i}}{2}$ for a second-order consistent stencil.
\end{proof}
Using this result, we derive an affine representation of the discrete operator.
\begin{corollary}\label{corol:aff1D}
An affine representation of the discrete operator in the one-dimensional case is given by
\begin{align*}
	\frac{1}{h^2} \left(
	\sigma_{i-1}
	\begin{bmatrix}
	- \frac{1}{2} & \frac{1}{2} & 0
	\end{bmatrix}
	+ \sigma_{i}
	\begin{bmatrix}
	- \frac{1}{2} & 1 & -\frac{1}{2}
	\end{bmatrix}
	+ \sigma_{i+1}
	\begin{bmatrix}
	0 & \frac{ 1 }{2} & -\frac{ 1}{2}
	\end{bmatrix}
	\right) .
\end{align*}
\end{corollary}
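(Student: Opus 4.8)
The plan is to start from the second-order consistent stencil established in Theorem~\ref{th:cookie1DStencil} and decompose each of its three entries as a linear combination of the three diffusion values $\sigma_{i-1}$, $\sigma_i$, $\sigma_{i+1}$. The key observation is that the stencil depends affinely---in fact linearly---on these values, so each $\sigma_j$ can be factored out to express the operator as a sum of parameter-independent stencils, each scaled by a single diffusion value. This is precisely the affine structure of equation~\eqref{eq:cookie2} realized at the level of the finite difference discretization.

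Concretely, I would read off the coefficients of the stencil from Theorem~\ref{th:cookie1DStencil} entrywise and group them by $\sigma_{i-1}$, $\sigma_i$, $\sigma_{i+1}$. The left entry $-\tfrac{\sigma_{i-1}+\sigma_i}{2}$ has coefficients $(-\tfrac12,-\tfrac12,0)$; the center entry $\tfrac{\sigma_{i-1}+2\sigma_i+\sigma_{i+1}}{2}$ has coefficients $(\tfrac12,1,\tfrac12)$; and the right entry $-\tfrac{\sigma_i+\sigma_{i+1}}{2}$ has coefficients $(0,-\tfrac12,-\tfrac12)$. Collecting the contribution of each fixed $\sigma_j$ across the three stencil positions then yields exactly the three constant stencils claimed in the corollary: $\begin{bmatrix}-\tfrac12 & \tfrac12 & 0\end{bmatrix}$ for $\sigma_{i-1}$, $\begin{bmatrix}-\tfrac12 & 1 & -\tfrac12\end{bmatrix}$ for $\sigma_i$, and $\begin{bmatrix}0 & \tfrac12 & -\tfrac12\end{bmatrix}$ for $\sigma_{i+1}$.

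The verification reduces to checking, via the distributive law, that summing the three scaled constant stencils reproduces each entry of the original stencil, and the common prefactor $\tfrac{1}{h^2}$ carries through unchanged. I do not expect a genuine obstacle here: the statement is a bookkeeping reformulation of Theorem~\ref{th:cookie1DStencil}, and the only point requiring care is to keep the sign and factor conventions consistent so that the three extracted stencils match the stated ones exactly. The substance of the corollary is not the computation but the resulting \emph{separation of the parameter dependence}---the scalars $\sigma_j$---from the fixed difference operators, which is exactly the representation needed to express $A(p)$ in a low-rank tensor format.
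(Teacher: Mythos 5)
Your proposal is correct and takes exactly the paper's route: the paper's proof is the one-line ``Follows from Theorem~\ref{th:cookie1DStencil} with linearity,'' and your entrywise collection of the stencil coefficients by $\sigma_{i-1}$, $\sigma_i$, $\sigma_{i+1}$ is precisely the bookkeeping that this invocation of linearity leaves implicit. The extracted constant stencils and the carried prefactor $\tfrac{1}{h^2}$ all check out, so there is nothing to add.
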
 
\begin{proof}
Follows from Theorem~\ref{th:cookie1DStencil} with linearity.
\end{proof}
We notice that in case of $\sigma_i = 1$ for all $i \in \lbrace 0,\dots, n \rbrace$ the discretization from Corollary~\ref{corol:aff1D} is equivalent to the standard discretization of the Laplace operator and that the diagonal of a local operator $A^{(\mu)}$ with $\mu \in \{1, \dots, d\}$ is given by 
\begin{align*}
	\diag(A^{(\mu)}) = \frac{1}{2h^2} \diag\left(0,\dots,0,1, 3, 4,\dots, 4,3, 1, 0, \dots, 0\right).
\end{align*}

Further we generalize these results to a two-dimensional geometry, there equation~\eqref{eq:cookie} reads
\begin{equation}\label{eq:cookie2D} 
	\begin{aligned}
		 -\nabla \cdot \left( \sigma(x,y,p) \nabla u(x,y,p) \right)  &= f(x,y) \quad &&\text{ in } \Omega, \\
		u(x,y,p) & = 0 \quad &&\text{ on } \partial\Omega,
	\end{aligned}
\end{equation}
where the left-hand side of the PDE is equal to 
\begin{equation*} 
	\begin{aligned}
		& -\nabla \cdot \left( \sigma(x,y,p) \nabla u(x,y,p) \right) \\  
		 = & -\frac{\partial}{\partial x} \left( \sigma(x,y,p) \frac{\partial}{\partial x} u(x,y,p) \right)- \frac{\partial}{\partial y} \left( \sigma(x,y,p) \frac{\partial}{\partial y} u(x,y,p) \right).
	\end{aligned}
\end{equation*}
\begin{theorem}\label{th:cookie2DStencil}
For equation~\eqref{eq:cookie2D} a second-order consistent stencil is given by
\begin{align*}
\frac{1}{h^2}
\begin{bmatrix}
0 & - \frac{\sigma_{i,i-1} + \sigma_{i,i}}{2} & 0 \\
- \frac{\sigma_{i-1,i} + \sigma_{i,i}}{2} & \frac{ \sigma_{i-1,i} + \sigma_{i,i-1} + 4 \sigma_{i,i} + \sigma_{i,i+1} + \sigma_{i+1,i} }{2} & -\frac{ \sigma_{i,i} + \sigma_{i+1,i} }{2} \\
0 &  -\frac{ \sigma_{i,i} + \sigma_{i,i+1} }{2} & 0
\end{bmatrix},
\end{align*}
where $\sigma_{i,j}$ denotes the discrete evaluation of $\sigma(x,y,p)$ at $(x_i,y_j)$.
\end{theorem}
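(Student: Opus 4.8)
The plan is to exploit the additive splitting of the two-dimensional operator displayed immediately above the theorem,
\[
-\nabla \cdot \left( \sigma \nabla u \right) = -\frac{\partial}{\partial x}\left(\sigma \frac{\partial u}{\partial x}\right) - \frac{\partial}{\partial y}\left(\sigma \frac{\partial u}{\partial y}\right),
\]
and to discretize each of the two directional terms independently with the one-dimensional stencil of Theorem~\ref{th:cookie1DStencil}. Since differentiation in $x$ and in $y$ is decoupled in this sum, the two one-dimensional discretizations can simply be superimposed, and no mixed second derivatives appear.

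First I would apply Theorem~\ref{th:cookie1DStencil} to the $x$-derivative term along the grid line through the center point $(x_i,y_i)$, using the coefficients $\sigma_{i-1,i}$, $\sigma_{i,i}$, $\sigma_{i+1,i}$. This yields a horizontal three-point stencil with weights $-\tfrac{\sigma_{i-1,i}+\sigma_{i,i}}{2}$, $\tfrac{\sigma_{i-1,i}+2\sigma_{i,i}+\sigma_{i+1,i}}{2}$, $-\tfrac{\sigma_{i,i}+\sigma_{i+1,i}}{2}$, scaled by $1/h^2$, placed in the middle row of the $3\times 3$ array. Next I would apply the same result to the $y$-derivative term along the grid line through $(x_i,y_i)$, using $\sigma_{i,i-1}$, $\sigma_{i,i}$, $\sigma_{i,i+1}$, to obtain the analogous vertical three-point stencil in the middle column. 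Adding the two stencils, the four off-diagonal weights occupy exactly the edge entries of the claimed array, and the two central weights combine to
\[
\frac{\sigma_{i-1,i}+2\sigma_{i,i}+\sigma_{i+1,i}}{2} + \frac{\sigma_{i,i-1}+2\sigma_{i,i}+\sigma_{i,i+1}}{2} = \frac{\sigma_{i-1,i}+\sigma_{i,i-1}+4\sigma_{i,i}+\sigma_{i,i+1}+\sigma_{i+1,i}}{2},
\]
which is precisely the center entry stated in the theorem.

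It remains to establish second-order consistency. This follows from the one-dimensional case: by Theorem~\ref{th:cookie1DStencil} each directional discretization has truncation error of order $h^2$, and because the continuous operator equals the exact sum of the two directional operators, the total truncation error is the sum of these two contributions and is therefore again of order $h^2$. I expect the only point requiring care to be confirming that the superposition introduces no cross terms, i.e., that applying the horizontal stencil leaves the vertical Taylor expansion untouched and vice versa. This is immediate here because each directional stencil acts solely along its own coordinate line and the operator splits cleanly, so no $\partial^2/(\partial x\,\partial y)$ contributions can arise to spoil the order.
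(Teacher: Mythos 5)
Your proposal is correct and follows exactly the route of the paper's own (one-line) proof: the paper also deduces the two-dimensional stencil from Theorem~\ref{th:cookie1DStencil} via the additive splitting of the operator in equation~\eqref{eq:cookie2D}. You merely spell out the superposition of the horizontal and vertical one-dimensional stencils and the resulting central weight, which is the implicit content of the paper's argument.
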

\begin{proof}
The statement follows from Theorem~\ref{th:cookie1DStencil} taking into account the structure of equation~\eqref{eq:cookie2D}. 
\end{proof}
Next, we derive an affine representation of the discrete operator.
\begin{corollary}\label{corol:aff2D}
An affine representation of the discrete operator in the two-dimensional case is given by
\begin{equation*}
	\frac{1}{h^2} \left(
	\begin{aligned}
		& \sigma_{i,i-1} ~M_{i,i-1} && \\
		+ ~\sigma_{i-1,i} ~M_{i- 1,i} ~~
		+~& \sigma_{i,i} ~M_{i,i} \;
		&&+ ~\sigma_{i+1,i} ~M_{i+1,i}\\
		+~& \sigma_{i,i+1}~ M_{i,i+1} &&
	\end{aligned}
	\right),
\end{equation*}
where
\begin{align*}
	&M_{i, i-1} =
		\begin{bmatrix}
		0 & - \frac{1}{2} & 0 \\
		0 & \frac{ 1 }{2} & 0 \\
		0 & 0 & 0
		\end{bmatrix},
	&&M_{i-1,i} =
		\begin{bmatrix}
		0 & 0 & 0 \\
		- \frac{1}{2} & \frac{ 1 }{2} & 0 \\
		0 & 0 & 0
		\end{bmatrix},\\
	&  M_{i,i} = 
		\begin{bmatrix}
			0 & - \frac{1}{2} & 0 \\
			- \frac{1}{2} & 2 & -\frac{ 1 }{2} \\
			0 &  -\frac{1 }{2} & 0
		\end{bmatrix}, \\
	&M_{i+1,i} =
	\begin{bmatrix}
	0 & 0 & 0 \\
	0 & \frac{1 }{2} & -\frac{ 1 }{2} \\
	0 &  0 & 0
	\end{bmatrix},
	&& M_{i,i+1} =
	\begin{bmatrix}
	0 & 0 & 0 \\
	0 & \frac{1 }{2} & 0 \\
	0 &  -\frac{ 1 }{2} & 0
	\end{bmatrix}	.
\end{align*}
\end{corollary}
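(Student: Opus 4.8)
The plan is to mirror the one-line argument that established the one-dimensional case in Corollary~\ref{corol:aff1D}: the $3\times 3$ stencil produced by Theorem~\ref{th:cookie2DStencil} depends linearly on the five local diffusion values $\sigma_{i,i-1}$, $\sigma_{i-1,i}$, $\sigma_{i,i}$, $\sigma_{i+1,i}$, $\sigma_{i,i+1}$, so I would simply decompose it into the sum of its contributions from each individual coefficient. Concretely, I would write the stencil matrix of Theorem~\ref{th:cookie2DStencil} as $\frac{1}{h^2}\sum \sigma_{\cdot,\cdot}\, M_{\cdot,\cdot}$ and verify that the matrix $M_{\cdot,\cdot}$ multiplying each $\sigma_{\cdot,\cdot}$ coincides with the one claimed in the statement.

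The key step is then pure bookkeeping: reading off, entry by entry of the $3\times 3$ array, the coefficient of each diffusion value. For the four ``outer'' values this is immediate, since each occurs in exactly two positions, namely the center and one adjacent arm, each contributing $\pm\frac12$; for example $\sigma_{i+1,i}$ contributes $+\frac12$ to the center and $-\frac12$ to the middle-right entry, which is exactly $M_{i+1,i}$, and analogously $\sigma_{i,i-1}$, $\sigma_{i-1,i}$, $\sigma_{i,i+1}$ reproduce $M_{i,i-1}$, $M_{i-1,i}$, $M_{i,i+1}$. The value $\sigma_{i,i}$ is the only one appearing in all four arms as well as in the center with coefficient $\frac{4}{2}=2$, and collecting these contributions yields precisely $M_{i,i}$.

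I expect there to be no genuine obstacle here; the statement follows from the linearity of the stencil in the $\sigma_{\cdot,\cdot}$, exactly as in the proof of Corollary~\ref{corol:aff1D}. The only point that warrants a quick sanity check is the center entry, which alone receives a contribution from every one of the five coefficients, so I would confirm that summing the center entries of the $M_{\cdot,\cdot}$ weighted by their respective $\sigma$ values indeed reassembles the numerator $\sigma_{i-1,i}+\sigma_{i,i-1}+4\sigma_{i,i}+\sigma_{i,i+1}+\sigma_{i+1,i}$ divided by $2$. Once this is checked, summing the five rank-one-in-$\sigma$ contributions returns the full stencil and the affine representation is established.
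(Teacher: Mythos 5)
Your proposal is correct and matches the paper's own proof, which reads in its entirety ``Follows from Theorem~\ref{th:cookie2DStencil} with linearity''; you simply spell out the entry-by-entry bookkeeping that this one-liner leaves implicit, including the correct check that the center entry reassembles $\tfrac{1}{2}\left(\sigma_{i-1,i}+\sigma_{i,i-1}+4\sigma_{i,i}+\sigma_{i,i+1}+\sigma_{i+1,i}\right)$. No gap, nothing further needed.
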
 
\begin{proof}
Follows from Theorem~\ref{th:cookie2DStencil} with linearity.
\end{proof}
Therefore the operator of equation~\eqref{eq:cookie}, discretized by the finite-difference method, has an affine structure of the form
\begin{align*}
	A(p) \coloneqq A^{(0)} + \sum_{\nu=1}^d p^{(\nu)} A^{(\nu)} .
\end{align*}
One could also use, e.g., the finite-element method, to derive an affine operator structure~\cite{Grasedyck2017,Kressner2011}.

	For the computation of the solution of equation~\eqref{eq:problem} for all possible $p \in \I$, we could define a large block-diagonal system with the following operator
\begin{align*}
	\mathcal{A} & =
	\begin{pmatrix}
		\mathbf{A}_{1} ^{(0)}& 0 & \hdots & 0 \\
		0 & 	\mathbf{A}_{2} ^{(0)} & \ddots & \vdots \\
		\vdots & \ddots & \ddots & 0 \\
		0 & \hdots  & 0 & 	\mathbf{A}_{n} ^{(0)}
	\end{pmatrix} 
	\eqqcolon \operatorname{blkdiag}\left(\mathbf{A}_{1} ^{(0)}, \dots, \mathbf{A}_{n} ^{(0)} \right)~,
\end{align*}
where $	\mathbf{A}_{\mu} ^{(0)} =  A^{(0)}+ \sum_{\nu = 1}^{d} p^{(\nu)}(\mu) A^{(\nu)} $ denotes the $\mu$th diagonal block.

Now, however, the memory requirement to store $\mathcal{A}$ grows exponentially in $n$ and so, even for moderate values of $d$ and $n_{\nu}$, a classical representation of our problem is infeasible. 
Therefore we want to reformulate the problem.
Using the notation $\mathbf{A}_{\mu} ^{(i)} =  \sum_{\nu = i}^{d} p^{(\nu)}(\mu) A^{(\nu)} $ for an $i \in \{1,\dots, d\}$, we get:
\begin{align*}
 \mathcal{A} = & \operatorname{blkdiag}\left(A^{(0)} + \mathbf{A}_{1} ^{(1)}, A^{(0)}+ \mathbf{A}_{2} ^{(1)}, \dots, A^{(0)}+ \mathbf{A}_{n} ^{(1)} \right) \\
		=&
		\operatorname{blkdiag}\left( A^{(0)}, A^{(0)}, \dots, A^{(0)} \right)\\
		& \quad +
		\operatorname{blkdiag}\left( p^{(1)}(1) A^{(1)} , p^{(1)}(2) A^{(1)} , \dots ,  p^{(1)}(n_1) A^{(1)} \right)\\
		& \quad +
		\operatorname{blkdiag}\left( \mathbf{A}_{1}^{(2)} , \mathbf{A}_{2}^{(2)} , \dots, \mathbf{A}_{n}^{(2)} \right)\\
		= & \Id_{n_{d}} \otimes \dots \otimes \Id_{n_{2}} \otimes \Id_{n_{1}} \otimes A^{(0)} \\
		& \quad +  \Id_{n_{d}} \otimes \dots \otimes \Id_{n_{2}} \otimes \operatorname{diag}\left(p^{(1)} \right) \otimes A^{(1)} \\
		& \quad + \dots  + \operatorname{diag}\left(p^{(d)} \right)  \otimes \dots \otimes \Id_{n_{2}} \otimes \Id_{n_{1}} \otimes A^{(d)}.
\end{align*}
This leads to the following data-sparse form of the operator
\begin{align*}
	\mathcal{A} = \sum\limits_{\nu = 0}^d \bigotimes\limits_{\mu= 0}^d A^{(\nu)}\left( \mu \right),\,
\end{align*}
where
\begin{align*}
	A^{(\nu)}\left( \mu \right) = 
	\begin{cases} 			
	A^{(\nu)}& \text{if } \mu = d,\\
	\operatorname{diag}\left(p^{(\nu)}\right)& \text{if } \mu+ \nu = d  \text{ and } \nu \neq 0, \\
	\Id_{n_{d-\nu}}& \text{otherwise}
	\end{cases}
\end{align*}
with $p^{(\nu)} = \left(p^{(\nu)}(1), \dots, p^{(\nu)}(n_{\nu})\right)$.
Similar results can be obtained for the \righthandside.

Such a representation is called a \emph{CANDECOMP/PARAFAC}, or short CP, \emph{representation}, cf.\ Definition~\ref{def:CPDecomposition}.
		\subsubsection{The CP decomposition}\label{subsec:CPDecomposition}
\begin{definition}[CP decomposition]\label{def:CPDecomposition}
	A \emph{CP representation} of a tensor $\mathcal{B}\in \R^{\I}$ is defined as
	\begin{align}
	\label{eq:CPDecomposition}
		\mathcal{B} = \sum\limits_{\nu = 1}^k \bigotimes\limits_{\mu = 1}^d b_{\nu}^{\left( \mu \right)} \quad \text{ with } b_{\nu}^{\left( \mu \right)} \in \R^{\I_{\mu}},
	\end{align}
where $\I = \bigtimes_{\mu=1}^d \I_{\mu}$ is an index set and $k \in \N_0$ is the according \emph{representation rank}.
The minimal $k$ is called the \emph{CP rank} of $\mathcal{B}$ and in this case equation~\eqref{eq:CPDecomposition} is called the \emph{CP decomposition} of $\mathcal{B}$.
Tensors of the form $\bigotimes_{\mu = 1}^d b_{\nu}^{( \mu )} $, i.e., rank $1$, are called \emph{elementary tensors}.
\end{definition}
A big advantage of the CP format is the data-sparsity in case of a small representation rank $k$, since a tensor $\mathcal{B} \in \R^{\I}$ of the form~\eqref{eq:CPDecomposition} has storage cost in $\mathcal{O}(k \sum_{\mu=1}^d \vert \I_{\mu} \vert )$.

The problem of finding conditions for the existence of a \lowrank\ approximation for a given tensor is a research topic of its own~\cite{Bachmayr2017,Dahmen2016,Kressner2011,Kressner2016}.
But since this goes beyond the scope of this article, we assume that our solution has a \lowrank\ approximation, as we want to solve a \parameterdependent\ linear system using \lowrank\ tensor formats.

For operators with rank $k>1$ no algorithm is known that can calculate the inverse of such an operator in a direct way. 
Hence, we need iterative solvers and arithmetic operations within this formats. 
Such arithmetic operations often lead to a growth of the representation rank and therefore we need a truncation down to smaller rank.
For matrices, the multigrid method together with truncation was used in~\cite{Grasedyck2007} to solve large-scale Sylvester equations. 
Since we want to guarantee the convergence of our iterative method, we have to guarantee that the truncation error is small enough, cf.~\cite{Hackbusch2008}, because then the iterative method will still converge.

The problem concerning the CP decomposition is that the set of CP tensors of (almost) rank $k$ is not closed. 
This makes the approximation of a CP tensor of (almost) rank $k$ an ill-posed problem and therefore we cannot guarantee that the truncation error will be small enough.
Because of this, we use the \emph{hierarchical Tucker} format to represent the solution of a linear system.
		\subsubsection{The hierarchical Tucker decomposition}
Next, we recall the hierarchical Tucker format, which was first introduced in~\cite{Hackbusch2009} and further analyzed in~\cite{Grasedyck2010}.

Our solution $u(p)$ depends on the parameters $p \coloneqq ( p^{(1)}, \dots, p^{(d)} )\in \I$, thus we can interpret the solution $\mathcal{U} \in \mathbb{R}^{\I}$ as a tensor of dimension $d\in \mathbb{N}$, where $\I = \bigtimes_{\mu = 1}^d \I_{\mu}$ is a finite product index set.
We call each $\mu \in \lbrace 1, \dots, d \rbrace$ a \emph{mode}.

The general idea of the hierarchical Tucker format is to define a hierarchy among the modes $D \coloneqq \lbrace 1, \dots, d \rbrace$. 
To do so, we define the so called \emph{dimension tree} $\mathcal{T}$ analogously to~\cite[Definition 3.1]{Grasedyck2010}.
\begin{definition}[Dimension tree]\label{def:dimensionTree}
	A \emph{dimension tree} $\mathcal{T}$ for dimension $d \in \N$ is a binary tree with nodes labeled by non-empty subsets of $D$. 
Its root is labeled with $D$ and each node $ q \in \mathcal{T}$ satisfies exactly one of the following possibilities  
\begin{itemize}
\item[(i)] $q \in \mathcal{L}( \mathcal{T} ) $ is a leaf of $\mathcal{T}$ and is labeled with a single-element subset $t = 	\lbrace j \rbrace \subseteq D$.
\item[(ii)] $q \in \I ( \mathcal{T} ) \coloneqq \mathcal{T} \setminus \mathcal{L}( \mathcal{T} )$ is an inner node of $\mathcal{T}$ and has exactly two sons $q_1, q_2 \in \mathcal{T}$, for which the corresponding labels $t,\,t_1,\,t_2 \in \operatorname{Pot}(D)\setminus \{ \emptyset \}$ fulfill $t = t_1  ~\dot{\cup}  ~ t_2$.
\end{itemize}
\end{definition}
We show an example of a dimension tree for $d = 4$ in Figure~\ref{fig:dtee}.\\
	\begin{figure}
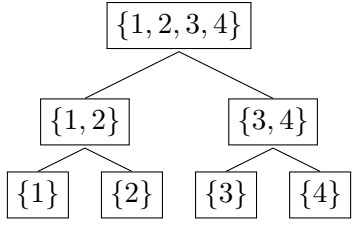

		\Tree [.{$\boxed{ \left\lbrace 1, 2, 3, 4 \right\rbrace }$} [.{$\boxed{ \left\lbrace 1, 2 \right\rbrace }$} {$\boxed{ \left\lbrace 1 \right\rbrace }$} {$\boxed{ \left\lbrace 2 \right\rbrace }$} ] [.{$\boxed{ \left\lbrace  3, 4 \right\rbrace }$} {$\boxed{ \left\lbrace 3 \right\rbrace }$} {$\boxed{ \left\lbrace 4 \right\rbrace }$} ] ] 
		\caption{Dimension tree for $d=4$}\label{fig:dtee}
	\end{figure}
Each node $q\in \mathcal{T}$ represents a non-empty subset $t \subseteq D$ of the modes. 
This leads to the corresponding \emph{matricization} for each node, which we define analogously to~\cite[Definition $3.3$]{Grasedyck2010}:
\begin{definition}[Matricization]\label{def:matricization}
Let $\mathcal{B} \in \mathbb{R}^{\I}$, $t \subseteq D$ with $t \neq \emptyset$, and $s \coloneqq D \setminus t$. 
The \emph{matricization} of $\mathcal{B}$ corresponding to $t$ is defined as $\mathcal{B}^{(t)} \in \mathbb{R}^{\I_t \times \I_s}$, where $\I_t \coloneqq \times_{\mu \in t} \I_\mu $ and $\I_s \coloneqq \bigtimes_{\mu \in s} \I_\mu $, with
\begin{align*}
\mathcal{B}^{(t)} \left[  (i_j)_{j \in t},(i_j)_{j \in s} \right] := \mathcal{B} \left[ i_1,\dots,i_d \right]\qquad \forall ~ i=(i_j)_{j \in D} ~.
\end{align*}
In particular $\mathcal{B}^{(D)} \in \mathbb{R}^{\I}$ holds.
\end{definition}
A matricization corresponds vividly to an unfolding of the tensor as illustrated in Figure~\ref{fig:matricization}.

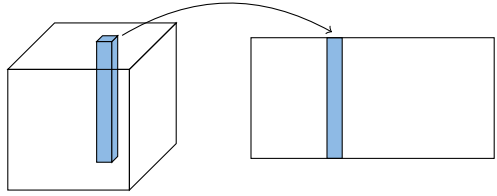
\begin{figure}
	\centering
		\begin{tikzpicture}[scale=4.0]
			\pgfmathsetmacro{\cubex}{0.4}
			\pgfmathsetmacro{\cubey}{0.4}
			\pgfmathsetmacro{\cubez}{0.4}
			\pgfmathsetmacro{\smallcubex}{0.05}
			\pgfmathsetmacro{\smallcubey}{0.05}
			\pgfmathsetmacro{\smallcubez}{0.05}
			\pgfmathsetmacro{\largecubex}{0.8}
			\pgfmathsetmacro{\largecubey}{0.8}
			\pgfmathsetmacro{\largecubez}{0.8}

			\draw[black] (0,0,0) -- ++(0,0,-\cubez) -- ++(0,-\cubey,0) --
		++(0,0,\cubez) -- cycle;
			\draw[black] (0,0,0) -- ++(-\cubex,0,0) -- ++(0,0,-\cubez) --
		++(\cubex,0,0) -- cycle;
		
			\draw[black,fill=rwth-50] (0,0.15,0.15) -- ++(-\smallcubex,0,0) --
		++(0,-\cubey,0) -- ++(\smallcubex,0,0) -- cycle;
			\draw[black,fill=rwth-50] (0,0.15,0.15) -- ++(0,0,-\smallcubez) --
		++(0,-\cubey,0) -- ++(0,0,\smallcubez) -- cycle;
			\draw[black,fill=rwth-50] (0,0.15,0.15) -- ++(-\smallcubex,0,0) --
		++(0,0,-\smallcubez) -- ++(\smallcubex,0,0) -- cycle;
		
			\draw[black] (0,0,0) -- ++(-\cubex,0,0) -- ++(0,-\cubey,0) --
		++(\cubex,0,0) -- cycle;
			\draw[black,fill=rwth-50] (0.7,0.105,0) -- ++(-\smallcubex,0,0) --
		++(0,-\cubey ,0) -- ++(\smallcubex,0,0) -- cycle;
			\draw[black] (1.2,0.105,0) -- ++(-\largecubex,0,0) -- ++(0,-\cubey,0) --
		++(\largecubex,0,0) -- cycle;

			\node(end) at (0.7,0.105){};
			\node(start) at  (0.0, 0.15, 0.15){};
			 \draw[black, ->] (start)  edge [bend left,  above] (end);
		
		\end{tikzpicture}
	\caption{Matricization}
	\label{fig:matricization}
\end{figure}
Based on the concept of matricizations, we define the \emph{hierarchical Tucker rank}, accordingly to~\cite[Definition $3.4$]{Grasedyck2010}:
\begin{definition}[Hierarchical Tucker rank]\label{def:HTRank}
Let $\mathcal{B} \in \mathbb{R}^{\I}$ and $ \mathcal{T} $ be a dimension tree. 
The \emph{hierarchical Tucker rank} of $\mathcal{B}$ is defined as 
		\begin{align*}
			\operatorname{rank}_{ \mathcal{T} }(\mathcal{B}) \coloneqq (r_t)_{t \in \mathcal{T}},
		\end{align*}
where $	r_t \coloneqq \operatorname{rank}( \mathcal{B}^{(t)} )$ denotes the matrix rank of the matricization $\mathcal{B}^{(t)}$ for all $t \in \mathcal{T}$. 
The set of tensors with hierarchical Tucker rank node-wise bounded by $(r_t)_{t \in \mathcal{T}}$ is defined as
\begin{align*}
\HT \left( \mathcal{T} , (r_t)_{t \in \mathcal{T}} \right) \coloneqq \{ ~  \mathcal{C}  \in \mathbb{R}^{\I} ~ \vert ~ \operatorname{rank}( \mathcal{C} ^{(t)}) \leq r_t \quad \forall ~ t \in \mathcal{T} ~ \}~.
\end{align*}
\end{definition}
By construction the so called \emph{nestedness property}
\begin{equation}\label{eq:nessedness}
\begin{aligned} 
&\operatorname{span}\lbrace \mathcal{B}^{(t)}[\cdot, i] \mid 1 \leq i \leq r_t \rbrace \subseteq\\
&\quad \operatorname{span} \lbrace \mathcal{B}^{(t_1)}[\cdot, i_1] \otimes \mathcal{B}^{(t_2)}[\cdot, i_2] \mid 1 \leq i_j \leq r_{t_j}, j=1,2 \rbrace 
\end{aligned}
\end{equation}
holds for all $t \in \mathcal{I}( \mathcal{T} )$ with sons $t_1, t_2$. 
\begin{definition}[(Nested) generator]\label{def:nestedGenerator}
Let $\mathcal{B} \in \mathbb{R}^{\I}$, $\mathcal{T}$ be a dimension tree and $r_t \in \mathbb{N}$ for all $t \in \mathcal{T}$. 
A family of matrices $(U_t)_{t \in \mathcal{T}}$ (also called a \emph{frame tree}) with \emph{frames} $U_t = (U_t[\cdot, 1] \vert \dots \vert U_t[\cdot, r_t] ) \in \mathbb{R}^{ \I_t \times r_t}$ is called a \emph{generator} of $\mathcal{B}$, if the following holds
\begin{align*}
\operatorname{range}(\mathcal{B}^{(t)}) \subseteq \operatorname{range}(U_t) \qquad \forall ~ t \in T~.
\end{align*}
The $(U_t)_{t \in \mathcal{T}}$ are called \emph{nested}, if for all $t \in \mathcal{I}(\mathcal{T})$ with $\operatorname{sons}(t) = \{t_1, t_2\}$
\begin{align*}
U_t\left[\cdot, i\right] \in \operatorname{span} \left\lbrace  U_{t_1}\left[\cdot, i_1\right] \otimes U_{t_2}\left[\cdot, i_2 \right]~ \vert   ~1 \leq i_j\leq r_{t_j}, j=1,2 \right\rbrace 
\end{align*}
holds for all $i \in \{1, \dots, r_t\}$.
\end{definition}
The nestedness property~\eqref{eq:nessedness} allows us to represent the tensor in an efficient way, similarly to~\cite[Definition $3.5$]{Grasedyck2010}, as we only need the range of all matricizations.
\begin{lemma}\label{lemma:nestedness}
Let $\mathcal{B} \in \mathbb{R}^{ \I }$, $\mathcal{T}$ be a dimension tree, $(r_t)_{t \in \mathcal{T}}$ the hierarchical Tucker rank of $\mathcal{B}$ and $t \in \mathcal{I}(\mathcal{T})$ with $\operatorname{sons}(t) = \{t_1, t_2\}$. 
Let further $U_s = (U_s[\cdot, 1] ~ \vert \dots \vert  ~ U_s[\cdot, r_s] ) \in \mathbb{R}^{ \I_s \times r_s}$ be a matrix, which contains column by column a basis of $\operatorname{range}(\mathcal{B}^{(s)})$ for $s \in \{t, t_1, t_2\}$.	
Then there exist coefficients $B_t[i, i_1,i_2] \in \mathbb{R}$, such that
\begin{align*}
U_t[\cdot, i] = \sum\limits_{i_1 = 1}^{r_{t_1}} \sum\limits_{i_2 = 1}^{r_{t_2}} B_t[i,i_1,i_2] ~ \left( U_{t_1}[\cdot, i_1] \otimes U_{t_2}[\cdot, i_2] \right)
\end{align*}
holds. 
The corresponding tensor $B_t \in \mathbb{R}^{r_t \times r_{t_1} \times r_{t_2}}$ is called \emph{transfer tensor}. 
\end{lemma}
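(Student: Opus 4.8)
The plan is to show that every column $U_t[\cdot,i]$ lies in the span of the elementary tensors $U_{t_1}[\cdot,i_1]\otimes U_{t_2}[\cdot,i_2]$; once this membership is established, reading off the coefficients of such a representation immediately produces the numbers $B_t[i,i_1,i_2]$ and hence the transfer tensor $B_t\in\mathbb{R}^{r_t\times r_{t_1}\times r_{t_2}}$.

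First, I would use that $U_t$ contains column by column a basis of $\operatorname{range}(\mathcal{B}^{(t)})$, so that $U_t[\cdot,i]\in\operatorname{range}(\mathcal{B}^{(t)})=\operatorname{span}\{\mathcal{B}^{(t)}[\cdot,j]\}$ for every $i\in\{1,\dots,r_t\}$. The nestedness property~\eqref{eq:nessedness}, which holds by construction for $t\in\mathcal{I}(\mathcal{T})$ with $\operatorname{sons}(t)=\{t_1,t_2\}$, then places this column into $\operatorname{span}\{\mathcal{B}^{(t_1)}[\cdot,i_1]\otimes\mathcal{B}^{(t_2)}[\cdot,i_2]\mid 1\le i_j\le r_{t_j}\}$. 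Second, I would replace the matricization columns on the right by the frame columns: because $U_{t_j}$ is a basis of $\operatorname{range}(\mathcal{B}^{(t_j)})$, each $\mathcal{B}^{(t_j)}[\cdot,i_j]$ is a linear combination of the columns of $U_{t_j}$, and substituting these expansions while using bilinearity of the Kronecker product yields
\begin{align*}
\operatorname{span}\{\mathcal{B}^{(t_1)}[\cdot,i_1]\otimes\mathcal{B}^{(t_2)}[\cdot,i_2]\}=\operatorname{span}\{U_{t_1}[\cdot,i_1]\otimes U_{t_2}[\cdot,i_2]\},
\end{align*}
both spaces coinciding with $\operatorname{range}(\mathcal{B}^{(t_1)})\otimes\operatorname{range}(\mathcal{B}^{(t_2)})$. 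Combining the two inclusions gives $U_t[\cdot,i]\in\operatorname{span}\{U_{t_1}[\cdot,i_1]\otimes U_{t_2}[\cdot,i_2]\}$, which is exactly the existence of real coefficients $B_t[i,i_1,i_2]$ with the asserted expansion, for every $i$.

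The routine but essential ingredient is the linear-algebra fact that a spanning set of $\operatorname{range}(\mathcal{B}^{(t_1)})$ tensored with a spanning set of $\operatorname{range}(\mathcal{B}^{(t_2)})$ spans the tensor-product space; this must be applied with the identification $\I_t=\I_{t_1}\times\I_{t_2}$ so that the Kronecker products are formed over the correct index set. The main obstacle, should one not take~\eqref{eq:nessedness} as given, is the nestedness inclusion itself: one reshapes each column of $\mathcal{B}^{(t)}$, indexed by $\I_s$ with $s=D\setminus t$, into a matrix over $\I_{t_1}\times\I_{t_2}$ and checks that its columns are columns of $\mathcal{B}^{(t_1)}$ and its rows are columns of $\mathcal{B}^{(t_2)}$, which forces the reshaped matrix, and hence the original column, into $\operatorname{range}(\mathcal{B}^{(t_1)})\otimes\operatorname{range}(\mathcal{B}^{(t_2)})$.
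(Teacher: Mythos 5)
Your proposal is correct and takes essentially the same route as the paper, which states the lemma without an explicit proof and relies precisely on the nestedness property~\eqref{eq:nessedness} together with the citation to Grasedyck (2010): each column $U_t[\cdot,i]$ lies in $\operatorname{range}(\mathcal{B}^{(t)})$, the nestedness inclusion applies, and the frame bases are substituted via bilinearity of the Kronecker product. Your closing sketch of why~\eqref{eq:nessedness} itself holds (reshaping a column of $\mathcal{B}^{(t)}$ over $\I_{t_1}\times\I_{t_2}$ and observing that its column and row spaces are contained in $\operatorname{range}(\mathcal{B}^{(t_1)})$ and $\operatorname{range}(\mathcal{B}^{(t_2)})$, respectively) is also the standard argument and merely fills in the detail the paper delegates to the cited reference.
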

Using the nestedness property~\eqref{eq:nessedness} we can represent $\mathcal{B}$ by providing the transfer tensors $B_t$ for all $t\in\mathcal{I}(\mathcal{T})$ and the frames $U_t$ for all $t \in \mathcal{L}(\mathcal{T})$.
The matrices $U_t$ for $t \in \mathcal{T}$ can be calculated, e.g., through the singular value decomposition applied to the corresponding matricizations $\mathcal{B}^{(t)}$.

Moreover, if the frames $U_s$ with $s \in \{t, t_1, t_2\}$ contain orthonormal bases of the range of the corresponding matricizations, the transfer tensor is given by
\begin{align*}
B_t[i, i_1,i_2] = ~ \langle U_t[\cdot, i] ~, U_{t_1}[\cdot, i_1] \otimes U_{t_2}[\cdot, i_2] \rangle ,
\end{align*}
where $\langle \cdot , \cdot \rangle$ denotes the Euclidean scalar product.

We can now define the \emph{hierarchical Tucker format representation} of a tensor similarly to~\cite[Definition $3.6$]{Grasedyck2010}.
\begin{definition}[Hierarchical Tucker format]\label{def:HTFormat}
Let $\mathcal{B} \in \mathbb{R}^{\I}$, $\mathcal{T}$ be a dimension tree, $r_t \in \mathbb{N}$ for all $t \in \mathcal{T}$ with $r_D = 1$, $(U_t)_{t \in \mathcal{L} (\mathcal{T})}$ with $U_t\in \mathbb{R}^{\I_t \times r_t}$ a nested generator of $\mathcal{B}$, and $(B_t)_{t \in \mathcal{I}(\mathcal{T}) }$ the corresponding transfer tensors. 
Then we call
\begin{align*}
\left((U_t)_{t \in \mathcal{L}\left(\mathcal{T}\right) }, ~ (B_t)_{t \in \mathcal{I}(\mathcal{T})} \right)
\end{align*}
a \emph{hierarchical Tucker representation} of $\mathcal{B}$.
The vector $(r_t)_{t \in \mathcal{T}}$ is called \emph{representation rank}.
\end{definition}
The memory required for a hierarchical Tucker representation of a tensor can be specified as follows.
\begin{lemma} \label{lemma:storageCosts}
Let $\mathcal{B} \in \mathbb{R}^{\I}$ with $\I = \bigtimes_{\mu = 1}^d \I_\mu $ and $\vert \I_\mu \vert = n_\mu$ for all $\mu \in D$. 
Let further $\mathcal{T}$ be a dimension tree and $(r_t)_{t \in \mathcal{T}}$ a representation rank of $\mathcal{B}$. 
Then the storage cost for the hierarchical Tucker representation of $\mathcal{B}$ is given by
\begin{align*}
\sum\limits_{\substack{t \in \mathcal{L}\left(\mathcal{T}  \right) \\ t=\{ \mu \}}} n_{\mu} r_{\mu} ~+ \sum\limits_{\substack{t \in \mathcal{I}(\mathcal{T}) \\ \operatorname{sons}(t)=\{t_1,t_2\} }} r_{t} r_{t_1} r_{t_2} ~.
\end{align*}
For $n= \max_{\mu \in D} n_{\mu}$ and $r= \max_{t \in \mathcal{T}} r_t$ the storage cost is in $\mathcal{O} ( rd n+ r^3 d )$.
\end{lemma}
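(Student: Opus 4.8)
The plan is to read the storage cost directly off the hierarchical Tucker representation given in Definition~\ref{def:HTFormat} and then to count the number of nodes of each type in the dimension tree. By that definition a hierarchical Tucker representation of $\mathcal{B}$ consists of exactly two kinds of data: the frames $(U_t)_{t \in \mathcal{L}(\mathcal{T})}$ attached to the leaves and the transfer tensors $(B_t)_{t \in \mathcal{I}(\mathcal{T})}$ attached to the inner nodes. Hence the total storage is the sum of the number of entries of all these objects, and it suffices to count these entries node by node.

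First I would treat the leaves. By Definition~\ref{def:dimensionTree} each leaf $t \in \mathcal{L}(\mathcal{T})$ is labeled with a singleton $t = \{\mu\}$, so that $\I_t = \I_{\mu}$ and $\lvert \I_t \rvert = n_{\mu}$. The associated frame $U_t \in \mathbb{R}^{\I_t \times r_t}$ therefore stores $n_{\mu} r_{\mu}$ real numbers, writing $r_{\mu}$ for $r_{\{\mu\}}$. Summing over all leaves yields the first sum in the claimed formula. Next I would treat the inner nodes: by Lemma~\ref{lemma:nestedness} together with Definition~\ref{def:HTFormat}, each inner node $t \in \mathcal{I}(\mathcal{T})$ with $\operatorname{sons}(t) = \{t_1, t_2\}$ carries a transfer tensor $B_t \in \mathbb{R}^{r_t \times r_{t_1} \times r_{t_2}}$ with exactly $r_t r_{t_1} r_{t_2}$ entries. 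Summing over all inner nodes yields the second sum, and adding the two contributions gives the exact storage cost.

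For the asymptotic bound I would count the nodes of the dimension tree. Since $\mathcal{T}$ is a binary tree whose leaves are labeled by the distinct singletons $\{1\}, \dots, \{d\}$, it has exactly $d$ leaves, and a full binary tree with $d$ leaves has precisely $d-1$ inner nodes. Bounding each leaf term by $n r$ and each inner term by $r^3$ with $n = \max_{\mu \in D} n_{\mu}$ and $r = \max_{t \in \mathcal{T}} r_t$, the first sum is at most $d n r$ and the second at most $(d-1) r^3 \leq d r^3$, so the total storage is in $\mathcal{O}(r d n + r^3 d)$.

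There is no genuine obstacle in this argument; it is essentially a bookkeeping exercise once the shape of the representation is fixed. The only point requiring a moment of care is the node count, namely the fact that a full binary tree with $d$ leaves has exactly $d-1$ inner nodes, which I would justify by a short induction on $d$ (each split of a node replaces one leaf by one inner node together with two new leaves, raising both counts consistently).
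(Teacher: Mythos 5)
Your proposal is correct, and it is in fact more detailed than the paper, whose entire proof consists of a citation to Lemma~3.7 of the cited work of Grasedyck on hierarchical singular value decompositions; the argument there is exactly your bookkeeping: $n_{\mu} r_{\mu}$ entries per leaf frame, $r_t r_{t_1} r_{t_2}$ per transfer tensor, and the count of $d$ leaves and $d-1$ inner nodes in the binary dimension tree yielding the $\mathcal{O}(rdn + r^3 d)$ bound. Your one point of care --- that a full binary tree with $d$ leaves has $d-1$ inner nodes, with the leaves being the $d$ distinct singletons because the sons' labels partition each parent's label --- is handled correctly, so nothing is missing.
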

\begin{proof}
See~\cite[Lemma $3.7$]{Grasedyck2010}.
\end{proof}
In~\cite{Grasedyck2010} the existence of a quasi-optimal truncation of a tensor $\mathcal{B} \in \HT (\mathcal{T}, (r_t)_{t \in \mathcal{T}})$ down to lower rank $(\tilde{r}_t)_{t \in \mathcal{T}}$ with an arithmetic cost in $\mathcal{O}( r^2 d n+ r^4 d )$ for $n = \max_{\mu \in D} n_{\mu} $ and $r = \max_{t \in \mathcal{T}} r_t$ was proven. 
For the error of $\tilde{ \mathcal{B} } \in \HT(\mathcal{T}, (\tilde{r}_t)_{t \in \mathcal{T}})$ the quasi-optimal error estimation
\begin{align*}
\Vert \mathcal{B} - \tilde{\mathcal{B}} \Vert \leq \sqrt{2 d - 3} \inf\limits_{ \mathcal{C} \in \HT (\mathcal{T}, (\tilde{r}_t)_{t \in \mathcal{T}})} \Vert  \mathcal{B}  -  \mathcal{C}  \Vert
\end{align*}
holds.

Moreover it is possible to transfer a CP representation with CP rank $r$ of a tensor into a hierarchical Tucker representation with rank node-wise bounded by $r$, cf.~\cite[Theorem $11.17$]{Hackbusch2012}. 
We could therefore also represent the operator and the \righthandside\ in the hierarchical Tucker format.
Inspired by~\cite[Chapter $13$]{Hackbusch2012} we want to summarize some arithmetic operations in the hierarchical Tucker format and their costs in Table~\ref{tab:tensor-arithmetic}.
\begin{table}[!ht]
	\centering
	\caption{ Operations and their costs in the hierarchical Tucker format }\label{tab:tensor-arithmetic}
	\begin{tabular}{lll}
		\toprule
		Operation & Cost & Reference \\ 
		\midrule
		Storage & $\mathcal{O}( dr^3 + d n r )$ & \cite[Lemma $3.7$]{Grasedyck2010}   \\
		Orthonormalization &$\mathcal{O}( 2 d n r^2 + 4 d r^4 )$&\cite[$(13.16b)$]{Hackbusch2012}\\
		Addition & $\mathcal{O}( 8 d n r^2 + 8 d r^4 )$ & \cite[$13.1.4$]{Hackbusch2012} \\
		Evaluation &  $\mathcal{O}( 2dr^3 )$ & \cite[$13.2.3$]{Hackbusch2012} \\ 
		Inner product&  $\mathcal{O}( 2dnr^2 + 6 dr^4 )$ & \cite[Lemma $13.7$]{Hackbusch2012} \\
		Operator application & $\mathcal{O}( 2 d n^2 r )$   & \cite[$13.9.1$]{Hackbusch2012} \\
		Truncation  &  $\mathcal{O}( 2 d r^2 n + 3 d r^4 )$ & \cite[$(11.46c)$]{Hackbusch2012}\\ 
		\bottomrule
	\end{tabular}
\end{table}
	\subsection{Smoother}\label{sec:expSumJac}
In this section we establish a \parameterdependent\ \lowrank\ tensor representation of the damped Jacobi smoother for the multigrid method.

In the case of the Richardson method, the iteration matrix has a CP representation with rank $d + 2$, since we can represent the operator $\mathcal{A}_{\ell}$ and the identity in the CP format with rank $d + 1$ and $1$ as
\begin{equation*}
\mathcal{S}_{\text{Rich},\omega, \ell} =  \bigotimes\limits_{\mu= 0}^d \Id_{n_{\mu}} - \sum\limits_{\nu= 0}^d  \bigotimes\limits_{\mu= 0}^d \omega A_{\ell}^{(\nu)}\left( \mu \right) .
\end{equation*}
In~\cite{Grasedyck2017} we used the damped Richardson method as smoother in a \parameterdependent\ multigrid method using \lowrank\ formats. 
We now want to consider the Jacobi method.

For an efficient Jacobi method we need a \lowrank\ representation of the inverse of the diagonal of $\mathcal{A}_{\ell}$ denoted by $\mathcal{D}_{\ell}^{-1} \coloneqq \diag(\mathcal{A}_{\ell})^{-1}$.
Since we know a CP representation of $\mathcal{A}_{\ell}$ with representation rank $d + 1$, we also know a CP representation of $\mathcal{D}_{\ell}$.
As mentioned in Section~\ref{subsec:CPDecomposition}, the CP format is not closed and, in general, one cannot expect to find an exact CP decomposition of the inverse. 
We thus want to find a sufficiently accurate approximation of the inverse.
Because $\mathcal{D}_{\ell}$ has a CP representation, we want to find an approximation of the inverse, again as a sum of separable elementary tensors and therefore as CP representation.
In the following we want to approximate the inverse of $\mathcal{D}_{\ell}$ with help of exponential sums, using results from~\cite{Hackbusch2019}.

To illustrate the idea, we first take a look at an approximation of $\frac{1}{x + y}$ by exponential sums.
We can approximate the function $x \mapsto \frac{1}{x}$ for $ x \in [1, R]$ by exponential sums through
	\begin{align*}
		\frac{1}{x}	\approx E_{k}(x) \coloneqq  \sum\limits_{m= 1}^k \alpha_{m} \exp(- \beta_{m} x)
	\end{align*}
with weights $ \alpha_{m}, \beta_{m} \in \R_+$.
Hackbusch~\cite{Hackbusch2019} was able to calculate weights corresponding to the interval $[1, R]$ and the number of summands $k \in \N$, such that the approximation $E_k^*$ fulfills an $L^{\infty}$-approximation property with error
	\begin{align*}
		 \epsilon_{\left[1, R\right]}(k) \coloneqq \min\limits_{E_k} \Vert \frac{1}{ \cdot } - E_{k}(\cdot ) \Vert_{\infty, \left[1, R\right]}  = \Vert \frac{1}{ \cdot } - E_{k}^*(\cdot ) \Vert_{\infty, \left[1, R\right]}~ .
	\end{align*}
Using this for the approximation of $\frac{1}{x + y}$ we obtain
	\begin{equation*}
	\begin{aligned}
	\frac{1}{x + y }	 &  \approx \sum\limits_{m= 1}^k \alpha_{m} \exp(- \beta_{m} \left(x + y \right)) 	\\
			&  = \sum\limits_{m= 1}^k \alpha_{m} \exp(- \beta_{m} x) \exp(- \beta_{m} y)
			\end{aligned}
	\end{equation*}
and therewith an approximation of the inverse of a separable sum again as a separable sum.

As a next step, we want to transfer such an exponential sum approximation to the inverse of $\mathcal{D}_{\ell}$.
In this case, the summands are elementary tensors and hence matrices.
For matrices $A, B \in \R^{M \times M}$ the fundamental property of the exponential function $\exp( A + B ) = \exp(A) \exp(B)$ holds, if $A$ and $B$ commute, i.e., $A B = B A$.
Since the diagonal of $\mathcal{A}_{\ell}$ is given through
	\begin{align*} 
		\mathcal{D}_{\ell} = \sum\limits_{\nu = 0}^d \bigotimes\limits_{\mu= 0}^d \diag \left( A_{\ell}^{(\nu)}\left( \mu \right) \right),
	\end{align*}
the single summands commute pairwise, such that the fundamental property of the exponential function holds.

In the following theorem we summarize some conditions needed to approximate the diagonal of a more general CP operator by exponential sums within the CP format.
\begin{theorem}\label{th:expSumApproximation}
		Let $\mathcal{B} = \sum_{\nu = 1}^r \bigotimes_{\mu= 1}^d B^{(\nu)}( \mu ) \in \R^{\I \times \I}$ be a CP operator with spectrum $\sigma ( \diag ( \mathcal{B})) \subseteq [1, R]$ for some $1 < R \leq \infty$.
		Further assume the diagonals of all $B^{\nu}( \mu )$ with $\nu \neq \mu$ to be constant with
			\begin{align*}
				\diag \left( B^{(\nu)}\left( \mu \right)  \right) = b_{\nu, \mu} \cdot \Id \quad \forall ~ \nu \neq \mu ~.
			\end{align*}
		Then for any $k \in \N$ and weights $\alpha_m, \beta_m \in \R_+$ from~\cite{Hackbusch2019} the approximation
			\begin{align*}
			 E_{k}^* \left(\diag \left(\mathcal{B}\right) \right) \coloneqq
				\begin{cases}
				\sum\limits_{m = 1}^k \alpha_{m} \bigotimes\limits_{\nu= 1}^r \exp\left(- \beta_m H^{(\nu)} \right)  \otimes \bigotimes\limits_{\nu = r+1}^d \Id & \text{ if }r < d,\\
				\sum\limits_{m = 1}^k \alpha_{m} \bigotimes\limits_{\nu= 1}^d \exp\left(- \beta_m H^{(\nu)}  \right)  & \text{ otherwise,}
				\end{cases}
		\end{align*}
		with $H^{(\nu)}  \coloneqq ( \prod_{j = 1}^d b_{\nu, j} ) \diag ( B^{(\nu)}( \nu )  ) $, fulfills
			\begin{align*}
				\lVert \left(\diag \left(\mathcal{B}\right) \right)^{-1} -  E_{k}^*\left(\diag \left(\mathcal{B}\right) \right) \rVert_2 \leq \epsilon_{\left[1, R\right]}(k) .
			\end{align*}
\end{theorem}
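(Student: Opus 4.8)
The plan is to recognize $E_k^*(\diag(\mathcal{B}))$ as the scalar exponential sum evaluated in the functional calculus of the diagonal matrix $\diag(\mathcal{B})$, and then to reduce the operator-norm estimate to the scalar $L^\infty$-bound of~\cite{Hackbusch2019} via the spectral theorem.

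First I would compute $\diag(\mathcal{B})$ explicitly. Since the diagonal of a Kronecker product is the Kronecker product of the diagonals, and since $\diag(B^{(\nu)}(\mu)) = b_{\nu,\mu}\Id$ whenever $\mu \neq \nu$, the $\nu$-th summand collapses to
\[
\diag\Big(\bigotimes_{\mu=1}^d B^{(\nu)}(\mu)\Big) = \Big(\prod_{\mu \neq \nu} b_{\nu,\mu}\Big)\, \Id \otimes \cdots \otimes \diag(B^{(\nu)}(\nu)) \otimes \cdots \otimes \Id,
\]
with the only nonconstant factor sitting in mode $\nu$. Absorbing the scalar prefactor into that factor recovers exactly $H^{(\nu)}$ (reading the product in its definition as running over $j \neq \nu$), so that $\diag(\mathcal{B}) = \sum_{\nu=1}^r \widehat{H}^{(\nu)}$, where $\widehat{H}^{(\nu)} \coloneqq \Id \otimes \cdots \otimes H^{(\nu)} \otimes \cdots \otimes \Id$ carries $H^{(\nu)}$ in mode $\nu$. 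These are $r$ diagonal matrices acting on pairwise distinct modes, hence they commute pairwise.

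Next I would exploit this commutativity together with the identity $\exp(A+B) = \exp(A)\exp(B)$ for commuting $A,B$, noted above, to factor for each $m$
\[
\exp(-\beta_m \diag(\mathcal{B})) = \prod_{\nu=1}^r \exp(-\beta_m \widehat{H}^{(\nu)}) = \bigotimes_{\nu=1}^r \exp(-\beta_m H^{(\nu)}) \otimes \bigotimes_{\nu=r+1}^d \Id,
\]
using that the exponential of an operator embedded in a single mode stays embedded in that mode. Multiplying by $\alpha_m$ and summing over $m$ shows that the piecewise definition of $E_k^*(\diag(\mathcal{B}))$ is precisely $\sum_{m=1}^k \alpha_m \exp(-\beta_m \diag(\mathcal{B}))$, i.e.\ the scalar function $E_k^*$ applied to $\diag(\mathcal{B})$.

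Finally, because $\diag(\mathcal{B})$ is diagonal, hence symmetric, with $\sigma(\diag(\mathcal{B})) \subseteq [1,R]$, both $(\diag(\mathcal{B}))^{-1}$ and $E_k^*(\diag(\mathcal{B}))$ act on each eigenvalue $\lambda$ by $\lambda \mapsto 1/\lambda$ and $\lambda \mapsto E_k^*(\lambda)$, respectively. The spectral norm of their difference therefore equals $\max_{\lambda \in \sigma(\diag(\mathcal{B}))} \lvert 1/\lambda - E_k^*(\lambda) \rvert$, which is at most $\sup_{\lambda \in [1,R]} \lvert 1/\lambda - E_k^*(\lambda) \rvert = \epsilon_{[1,R]}(k)$ by the defining property of the weights $\alpha_m, \beta_m$ from~\cite{Hackbusch2019}, giving the claim. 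The main obstacle is the algebraic bookkeeping of the first two steps: one must pull the constant off-mode diagonals out of each Kronecker product so as to reproduce exactly the factor defining $H^{(\nu)}$, and then check that the commuting-exponential factorization yields precisely the piecewise form of $E_k^*$, including the trailing identity factors in the case $r<d$. Once the identification $E_k^*(\diag(\mathcal{B})) = \sum_m \alpha_m \exp(-\beta_m \diag(\mathcal{B}))$ is in place, the norm bound is an immediate consequence of the spectral theorem.
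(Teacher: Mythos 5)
Your proposal is correct and follows essentially the same route as the paper's proof: you rewrite $\diag(\mathcal{B})$ as a sum of pairwise commuting single-mode embeddings of the $H^{(\nu)}$ (correctly reading the scalar prefactor as the product of the off-mode constants $b_{\nu,\mu}$, $\mu \neq \nu$) and then factor $\exp(-\beta_m \diag(\mathcal{B}))$ into the stated Kronecker form, exactly as the paper does. You additionally make explicit the final spectral-calculus step reducing the operator-norm estimate to the scalar $L^\infty$-bound $\epsilon_{[1,R]}(k)$ of the weights, a step the paper's proof leaves implicit.
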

\begin{proof}
	It holds
		\begin{align*}
			\diag \left(\mathcal{B} \right)  = \sum\limits_{\nu = 1}^r \bigotimes\limits_{\mu= 1}^d  \diag \left( B^{(\nu)}\left( \mu \right)  \right) 
			 = \sum\limits_{\nu = 1}^r \bigotimes\limits_{\mu < \nu} \Id \otimes \left( H^{(\nu)} \right) \otimes \bigotimes\limits_{\mu > \nu} \Id 
		\end{align*}
	and since all summands of the CP representation commute pairwise, the approximation via exponential sums leads to
	\begin{align*}
			E_{k}^*\left(\diag \left(\mathcal{B}\right) \right) &= \sum\limits_{m = 1}^k \alpha_{m} \exp \left( - \beta_m \diag(\mathcal{B}) \right)\\
			&=  \sum\limits_{m = 1}^k \alpha_{m} \prod\limits_{\nu = 1}^r \exp \left( - \beta_m \bigotimes\limits_{\mu < \nu} \Id \otimes \left(H^{(\nu)} \right) \otimes \bigotimes\limits_{\mu > \nu} \Id \right)\\
			&=	
			\begin{cases}
			\sum\limits_{m= 1}^k \alpha_{m} \bigotimes\limits_{\nu= 1}^r \exp\left(- \beta_m H^{(\nu)}  \right)  \otimes \bigotimes\limits_{\nu = r+1}^d \Id  ~ \text{ if }r < d,\\
			\sum\limits_{m = 1}^k \alpha_{m} \bigotimes\limits_{\nu= 1}^d \exp\left(- \beta_m H^{(\nu)} \right) \qquad \qquad \text{ otherwise}.
			\end{cases}
		\end{align*}
\end{proof}
\begin{remark}\label{remark:weights}
Following an idea of~\cite{Hackbusch2019}, we can relax the requirements of Theorem~\ref{th:expSumApproximation} concerning the interval $[1, R]$ demanding the condition $\sigma ( \diag ( \mathcal{B})) \subseteq [a, b]$ for some $0 < a < b \leq \infty$ by scaling the weights like
		\begin{align*}
			\alpha_{m, \left[a,b\right]} = \frac{\alpha_{m, \left[1,R\right]}}{a} \quad \text{and} \quad
			\beta_{m, \left[a,b\right]} = \frac{\beta_{m, \left[1,R\right]}}{a}
		\end{align*}
	for all $m \in \{1, \dots, k\}$, where $R = \frac{b}{a}$ and $\epsilon_{[a, b]}(k) =\frac{ \epsilon_{[1, R]}(k)}{a}$ holds.
\end{remark}
It holds $\mathcal{A}_{\ell} = \sum_{\nu = 0}^d \bigotimes_{\mu= 0}^d A_{\ell}^{(\nu)}( \mu )$ with
	\begin{align*} 
	A_{\ell}^{(\nu)}\left( \mu \right) = 
	\begin{cases} 			
	A_{\ell}^{(\nu)}& \text{if } \mu = d,\\
	\operatorname{diag}\left(p^{(\nu)}\right)& \text{if } \mu+ \nu = d  \text{ and } \nu \neq 0, \\
	\Id_{n_{d-\nu}}& \text{otherwise,}
	\end{cases}
	\end{align*}
and as all $A_{\ell}^{(\nu)}$ are local stiffness matrices, defined, e.g., as in Corollary~\ref{corol:aff1D} or~\ref{corol:aff2D}, their diagonals are either part of stiffness matrices or zero.

Furthermore, the diagonal matrices $\diag(A_{\ell}^{(\nu)})$ for $\nu \in \{1, \dots, d\}$ can be decomposed based on their piecewise different entries $c_{\nu,\gamma} > 0$, such that
\begin{align}\label{eq:decompDiag}
\diag(A_{\ell}^{(\nu)}) = \sum\limits_{\gamma = 1 }^{L_{\nu}} c_{\nu,\gamma}  \widetilde{\Id}_{\nu, \gamma}
\end{align}
holds, where $L_{\nu} \in \N$ is the number of piecewise different entries and diagonal matrices $\widetilde{\Id}_{\nu, \gamma} \in \{0,1\}^{n_{0}  \times n_{0}}$.
With this equation and $L_0 \coloneqq 1$ it holds:
\begin{align}\label{eq:diagSum}
\diag(\mathcal{A}_{\ell}) = \sum\limits_{\nu = 0}^d \sum\limits_{\gamma_{\nu} = 1}^{L_{\nu}} \bigotimes\limits_{\mu= 0}^d \widetilde{A}_{\ell}^{(\nu)}\left( \mu, \gamma_{\nu} \right),
\end{align}
with
\begin{align*} 
\widetilde{A}_{\ell}^{(\nu)}\left( \mu, \gamma_{\nu} \right) = 
\begin{cases} 			
\diag(A_{\ell}^{(0)})& \text{if } \mu = d \text{ and } \nu=0,\\
\widetilde{\Id}_{\nu,\gamma} & \text{if } \mu = d \text{ and } \nu \neq 0,\\
c_{\nu,\gamma} \operatorname{diag}\left(p^{(\nu)}\right)& \text{if } \mu+ \nu = d  \text{ and } \nu \neq 0, \\
\Id_{n_{d-\nu}}& \text{otherwise.}
\end{cases}
\end{align*}
Now we are able to formulate an approximate inverse of our operator via exponential sums.
For the sake of simplicity, we will assume $L_{\nu} = 1$ for all $\nu \in \lbrace 0, \dots, d \rbrace$.
We further assume that the partition $(\Omega_{\nu})_{\nu \in \{1 \dots, d\}}$ of $\Omega$ is so disjoint that
\begin{align}\label{eq:disjointAssumption}
\diag(A^{(\nu_1)}_{\ell}) \cdot \diag(A^{(\nu_2)}_{\ell}) = 0 \text{ for all } \nu_1 \neq \nu_2 \text{ and } \ell >0
\end{align}
holds true.
This assumption is valid, e.g., for our model problem~\eqref{eq:cookie} if it is discretized as in Theorems~\ref{th:cookie1DStencil} or~\ref{th:cookie2DStencil} and the edges of the grid cells of the piecewise different parameters on the coarsest grid have positive distance.
Such an assumption is invalid, e.g., in case of intersecting parameters, which occur through the Karhunen-Lo{\`e}ve expansion.
In future work, we want to generalize our results for such problems. 
\begin{theorem}\label{th:approximativeExpSumInverse}
Let $\sigma ( \mathcal{D}_{\ell} ) \subseteq [a, b]$ for some $0 < a < b \leq \infty$, $L_{\nu} = 1$ for all $\nu \in \lbrace 0, \dots, d \rbrace$, assumption~\eqref{eq:disjointAssumption} holds true, $\ell, k \in \N$ and $\alpha_m, \beta_m \in \R_+$ be weights for the exponential sum approximation from Remark~\ref{remark:weights}.
Then $E_k^*( \mathcal{D}_{\ell} )$ has a CP representation given by
\begin{align*}
E_k^*\left( \mathcal{D}_{\ell} \right) = \sum\limits_{m = 1}^k \sum\limits_{\nu=1 }^d \alpha_{m} \bigotimes\limits_{\mu = 0}^{d} \widehat{D}_{\ell}^{(m, \mu, \nu)} ~,
\end{align*}
where 
\begin{align*}
\widehat{D}_{\ell}^{(m, \mu, \nu)} = 
\begin{cases}
	\exp \left(- \beta_m \diag\left(A_{\ell}^{(0)}\right)\right) \widetilde{\Id}_{\nu} & \text{if } \mu = d,\\
	\exp \left( -\beta_{m} c_{d -\mu} D_{\ell}^{(d - \mu)} \right) & \text{if } \mu = \nu,\\
	\Id_{n_{d-\mu}} & \text{otherwise}.
\end{cases}
\end{align*}
\end{theorem}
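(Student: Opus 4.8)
The plan is to start directly from the definition of the exponential\hyp{}sum approximant, $E_k^*(\mathcal{D}_\ell) = \sum_{m=1}^k \alpha_m \exp(-\beta_m \mathcal{D}_\ell)$, and to turn it into a CP representation by exploiting that $\mathcal{D}_\ell$ is diagonal. First I would write $\mathcal{D}_\ell = \diag(\mathcal{A}_\ell) = \sum_{\nu=0}^d T_\nu$ with the elementary diagonal tensors $T_\nu \coloneqq \bigotimes_{\mu=0}^d \widetilde{A}_\ell^{(\nu)}(\mu)$ from~\eqref{eq:diagSum}, specialised to $L_\nu = 1$. Since all $T_\nu$ are diagonal, they commute pairwise, so, as already observed before Theorem~\ref{th:expSumApproximation}, the fundamental property of the matrix exponential yields $\exp(-\beta_m \mathcal{D}_\ell) = \prod_{\nu=0}^d \exp(-\beta_m T_\nu)$. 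The whole argument then reduces to computing these factors and multiplying them out.

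The second step is to evaluate each factor. For $\nu = 0$ only the physical mode $\mu = d$ is nontrivial, giving $\exp(-\beta_m T_0) = \exp(-\beta_m \diag(A_\ell^{(0)}))$ on mode $d$ and the identity elsewhere. For $\nu \geq 1$ the tensor $T_\nu$ couples the physical mode $d$, carrying the $\{0,1\}$\hyp{}projector $\widetilde{\Id}_\nu$, with the parameter mode $d-\nu$, carrying $c_\nu \diag(p^{(\nu)})$. Reading off the diagonal entries and using $\widetilde{\Id}_\nu \in \{0,1\}^{n_0 \times n_0}$, I would show
\begin{align*}
\exp(-\beta_m T_\nu) = (\Id - \widetilde{\Id}_\nu) + \widetilde{\Id}_\nu \otimes \exp\left(-\beta_m c_\nu \diag(p^{(\nu)})\right),
\end{align*}
where the projector and its complement sit on mode $d$, the parameter exponential on mode $d-\nu$, and all remaining modes carry the identity: on the complement of the support of $\widetilde{\Id}_\nu$ the exponent vanishes and the factor acts as the identity, while on its support the exponent reduces to the single parameter contribution.

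The heart of the proof, and the step I expect to be the main obstacle, is the decoupling of the product $\prod_{\nu=1}^d \exp(-\beta_m T_\nu)$ of these two\hyp{}term factors into a single sum. Expanding the product, any term that selects the nontrivial summand for two distinct indices $\nu_1 \neq \nu_2$ contains the factor $\widetilde{\Id}_{\nu_1}\widetilde{\Id}_{\nu_2}$ on mode $d$, which vanishes by the disjointness assumption~\eqref{eq:disjointAssumption}, i.e.\ $\widetilde{\Id}_{\nu_1}\widetilde{\Id}_{\nu_2} = 0$. Hence only the terms selecting at most one nontrivial summand survive, and the identity $\widetilde{\Id}_{\nu_0}(\Id - \widetilde{\Id}_\nu) = \widetilde{\Id}_{\nu_0}$ (again by~\eqref{eq:disjointAssumption}) collapses each surviving term to $\widetilde{\Id}_{\nu_0}\otimes\exp(-\beta_m c_{\nu_0}\diag(p^{(\nu_0)}))$. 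Assuming the regions exhaust the grid so that $\sum_\nu \widetilde{\Id}_\nu = \Id$ on mode $d$, the remaining ``all\hyp{}complement'' term $\prod_\nu(\Id - \widetilde{\Id}_\nu) = \Id - \sum_\nu \widetilde{\Id}_\nu$ drops out, leaving exactly one summand per region.

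Finally I would multiply the decoupled sum by the $\nu=0$ factor $\exp(-\beta_m \diag(A_\ell^{(0)}))$ on mode $d$, so that each region term acquires the factor $\exp(-\beta_m \diag(A_\ell^{(0)}))\widetilde{\Id}_\nu$ there, multiply by $\alpha_m$, and sum over $m$. After relabelling the region index and the associated parameter mode this is precisely the claimed CP representation with the stated $\widehat{D}_\ell^{(m,\mu,\nu)}$. The delicate points to get right are the careful bookkeeping of which mode each factor acts on during the expansion and the verification that the complementary term indeed vanishes under the partition assumption; the accompanying approximation error $\epsilon_{[a,b]}(k)$ then follows from Remark~\ref{remark:weights} together with the estimate in Theorem~\ref{th:expSumApproximation}.
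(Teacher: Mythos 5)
Your proposal follows essentially the same route as the paper's proof: you factor $\exp(-\beta_m \mathcal{D}_\ell)$ into a product using the pairwise commutativity of the diagonal summands, evaluate each factor via the projector identity $\exp\bigl( \Id \otimes M \otimes \widetilde{\Id} \bigr) = \Id \otimes \exp(M) \otimes \widetilde{\Id} + \Id \otimes \Id \otimes \bigl( \Id - \widetilde{\Id} \bigr)$, expand the product of the resulting two-term factors into a sum over subsets $t \subseteq \{1,\dots,d\}$, and use assumption~\eqref{eq:disjointAssumption} to annihilate all terms with $\lvert t \rvert \geq 2$ and to collapse the singleton terms via $\widetilde{\Id}_{\nu} \prod_{\eta \neq \nu} \bigl( \Id - \widetilde{\Id}_{\eta} \bigr) = \widetilde{\Id}_{\nu}$. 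This is exactly the structure of the published proof, and your mode bookkeeping (parameter exponential on mode $d-\nu$, projector on the spatial mode $d$) is consistent with the theorem up to the paper's own index relabelling.

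The one place you genuinely deviate is the $t = \emptyset$ term, and there you are more careful than the paper: the published proof silently discards the all-complement term $\exp\bigl(-\beta_m \diag(A_{\ell}^{(0)})\bigr) \prod_{\nu=1}^d \bigl( \Id - \widetilde{\Id}_{\nu} \bigr)$ (tensored with identities on all parameter modes), whereas you correctly observe that it vanishes only if $\sum_{\nu=1}^d \widetilde{\Id}_{\nu} = \Id$ and promote this to an explicit hypothesis. Be aware, though, that this exhaustion assumption is not among the theorem's stated hypotheses and sits uneasily with the paper's own justification of~\eqref{eq:disjointAssumption}: if the parameter regions have positive distance on the coarsest grid, the pure background points are covered by no $\widetilde{\Id}_{\nu}$, so $\prod_{\nu} \bigl( \Id - \widetilde{\Id}_{\nu} \bigr) \neq 0$ and the theorem's right-hand side as printed is missing exactly this one additional rank-one background summand. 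So either the representation must be extended by that term (raising the rank bound from $kd$ to $k(d+1)$), or your extra assumption must be imposed; you have in effect identified a small gap in the paper's statement and proof rather than introduced one yourself. Finally, the error estimate you mention at the end is not part of this theorem's claim, but it does follow from Remark~\ref{remark:weights} as you say.
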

\begin{proof}
Since all summands in the CP representation of $\mathcal{D}_{\ell}$ are diagonal matrices they commute pairwise, which allows the factorization of the exponential function as
\begin{align*}
E_k^*\left( \mathcal{D}_{\ell} \right) = \sum\limits_{m = 1}^k \alpha_m \prod\limits_{\nu=0}^d \prod\limits_{\gamma_{\nu}=1}^{L_{\nu}} \exp \left( \bigotimes\limits_{\mu=0}^d \widetilde{A}_{\ell}^{(\nu)}\left( \mu, \gamma_{\nu} \right) \right).
\end{align*}
Moreover for any quadratic matrix $M$ and $\widetilde{\Id}$ as in equation~\eqref{eq:decompDiag} it holds
\begin{align*}
\exp\left( \Id \otimes M \otimes \widetilde{\Id} \right) = \Id \otimes \exp \left( M \right) \otimes \widetilde{\Id} + \Id \otimes \Id \otimes \left( \Id - \widetilde{\Id} \right) ,
\end{align*}
since $( \widetilde{\Id} )^j = \widetilde{\Id}$ for all $j \in \N$.
With $L_{\nu}=1$ we derive
\begin{align*}
E_k^*\left( \mathcal{D}_{\ell} \right) = \sum\limits_{m = 1}^k \sum\limits_{t\subseteq \left\lbrace 1,\dots,d \right\rbrace } \alpha_{m} \bigotimes\limits_{\mu = 0}^{d} \widehat{D}_{\ell}^{(m, \mu, t)} ~,
\end{align*}
where 
\begin{align*}
\widehat{D}_{\ell}^{(m, \mu, t)} = 
\begin{cases}
	\exp \left(- \beta_m \diag\left(A_{\ell}^{(0)}\right)\right) \widetilde{\Id}_t & \text{if } \mu = d,\\
	\exp \left( -\beta_{m} c_{d -\mu} D_{\ell}^{(d - \mu)} \right) & \text{if } \mu \in t,\\
	\Id_{n_{d-\mu}} & \text{otherwise},
\end{cases}
\end{align*}
with $\widetilde{\Id}_t \coloneqq \prod_{\nu \in t} \widetilde{\Id}_{\nu} \prod_{\eta \in t^c} ( \Id_{n_{0}} - \widetilde{\Id}_{\eta} )$.
With assumption~\eqref{eq:disjointAssumption} we obtain $\widetilde{\Id}_t =0$ for $\vert t \vert \geq 2$ and therefore
\begin{align*}
\widetilde{\Id}_{ \left\lbrace \nu \right\rbrace }  = \widetilde{\Id}_{\nu} \prod\limits_{\substack{\eta = 1\\ \eta \neq \nu}}^d \left( \Id_{n_0} - \widetilde{\Id}_{\eta} \right) = \prod\limits_{\substack{ \eta = 1\\ \eta \neq \nu}}^d \left(  \widetilde{\Id}_{\nu} -  \widetilde{\Id}_{\nu} \widetilde{\Id}_{\eta} \right) =  \widetilde{\Id}_{\nu}.
\end{align*}
Thus, the theorem is true.
\end{proof}
\begin{remark}
The representation rank of the inverse diagonal from Theorem~\ref{th:approximativeExpSumInverse} is bounded by $k\cdot d$.
Since $k$ derives from the approximation by exponential sums and can be uniformly bounded for all parameter values, the rank of the inverse of the diagonal of the operator grows linearly in the number of parameters.
\end{remark}
With the result of Theorem~\ref{th:approximativeExpSumInverse} we define the iteration matrix and prove the smoothing property for the approximate damped Jacobi method.
\begin{theorem}\label{th:smoothingPropertyApproximativeJacobi}
Let $\sigma ( \mathcal{D}_{\ell} ) \subseteq [a, b]$ for some $0 < a < b \leq \infty$, $L_{\nu} = 1$ for all $\nu \in \lbrace 0, \dots, d \rbrace$, $k \in \N$ and $\alpha_m, \beta_m \in \R_+$ be weights for the exponential sum approximation from Remark~\ref{remark:weights}.
The iteration matrix of the \emph{approximate damped Jacobi method} is given by
\begin{align*}
\mathcal{S}_{\text{approxJac}, k, \omega, \ell} \coloneqq \Id - \omega E_k^* \left(\mathcal{D}_{\ell}\right) \mathcal{A}_{\ell},
\end{align*}
and fulfills the smoothing property for any damping parameter $\omega \in ( 0, \omega_0  )$, with 
\begin{align*}
\omega_0 =  \frac{1}{\rho \left(E_k^* \left(\mathcal{D}_{\ell}\right) \mathcal{A}_{\ell} \right)} .
\end{align*}
\end{theorem}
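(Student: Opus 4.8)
The plan is to mirror the proof of Theorem~\ref{th:smoothingJacobiParameter} for the exact damped Jacobi method, replacing the exact inverse diagonal $\mathcal{D}_{\ell}^{-1}$ by its exponential-sum approximation $\mathcal{E} \coloneqq E_k^*(\mathcal{D}_{\ell})$ and then controlling the error that this substitution introduces. First I would record that $\mathcal{E}$ is a symmetric positive definite diagonal matrix: as shown in the proof of Theorem~\ref{th:approximativeExpSumInverse} it equals $\sum_{m} \alpha_m \exp(-\beta_m \mathcal{D}_{\ell})$, a sum of positive diagonal matrices with weights $\alpha_m, \beta_m \in \R_+$. This lets me form $\mathcal{E}^{1/2}$ and define the symmetrized operator $\tilde{\mathcal{A}} \coloneqq \mathcal{E}^{1/2} \mathcal{A}_{\ell} \mathcal{E}^{1/2}$, which is again symmetric positive definite and similar to $\mathcal{E}\mathcal{A}_{\ell}$, so that $\rho(\tilde{\mathcal{A}}) = \rho(\mathcal{E}\mathcal{A}_{\ell}) = \omega_0^{-1}$.

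Next I would use the similarity $\Id - \omega\mathcal{E}\mathcal{A}_{\ell} = \mathcal{E}^{1/2}(\Id - \omega\tilde{\mathcal{A}})\mathcal{E}^{-1/2}$ to pull the smoother powers through, obtaining
\begin{align*}
\mathcal{A}_{\ell}\, \mathcal{S}_{\text{approxJac}, k, \omega, \ell}^{\nu} = \mathcal{E}^{-1/2}\, \tilde{\mathcal{A}}\left(\Id - \omega\tilde{\mathcal{A}}\right)^{\nu}\mathcal{E}^{-1/2}.
\end{align*}
Because $\omega \in (0,\omega_0)$ forces $0 \le \omega\tilde{\mathcal{A}} \le \Id$, Lemma~\ref{lemma:smoothingPropertyParameter} applied to $B = \omega\tilde{\mathcal{A}}$ gives $\lVert \omega\tilde{\mathcal{A}}(\Id - \omega\tilde{\mathcal{A}})^{\nu}\rVert_2 \le \eta_{0}(\nu)$. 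Taking spectral norms of the factorization and using $\lVert\mathcal{E}^{-1/2}\rVert_2^2 = \lVert\mathcal{E}^{-1}\rVert_2$ then yields
\begin{align*}
\left\lVert \mathcal{A}_{\ell}\, \mathcal{S}_{\text{approxJac}, k, \omega, \ell}^{\nu} \right\rVert_2 \le \lVert \mathcal{E}^{-1}\rVert_2 \; \frac{\eta_{0}(\nu)}{\omega}.
\end{align*}

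The crux is to convert $\lVert \mathcal{E}^{-1}\rVert_2$ into a level-independent multiple of $\lVert \mathcal{A}_{\ell}\rVert_2$, so that the bound attains the required form $\eta(\nu)\lVert\mathcal{A}_{\ell}\rVert_2$. Here I would exploit that $\mathcal{E}$ and $\mathcal{D}_{\ell}$ are diagonal: writing $\epsilon \coloneqq \epsilon_{[a,b]}(k)$ for the approximation error from Remark~\ref{remark:weights}, the error estimate of Theorem~\ref{th:approximativeExpSumInverse} reads entrywise as $\lvert d_i^{-1} - (\mathcal{E})_{ii}\rvert \le \epsilon$ for the diagonal entries $d_i \in [a,b]$ of $\mathcal{D}_{\ell}$, so $(\mathcal{E})_{ii} \ge d_i^{-1} - \epsilon$. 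Provided $k$ is large enough that $b\epsilon = R\,\epsilon_{[1,R]}(k) < 1$ with $R = b/a$, monotonicity of $t \mapsto t/(1-t\epsilon)$ gives $\lVert\mathcal{E}^{-1}\rVert_2 \le \lVert\mathcal{D}_{\ell}\rVert_2/(1-b\epsilon)$. Combining this with the elementary estimate $\lVert\mathcal{D}_{\ell}\rVert_2 \le \lVert\mathcal{A}_{\ell}\rVert_2$ (each diagonal entry of an SPD matrix is bounded by its largest eigenvalue) produces
\begin{align*}
\left\lVert \mathcal{A}_{\ell}\, \mathcal{S}_{\text{approxJac}, k, \omega, \ell}^{\nu} \right\rVert_2 \le \frac{\eta_{0}(\nu)}{\omega\,(1 - R\,\epsilon_{[1,R]}(k))}\,\lVert\mathcal{A}_{\ell}\rVert_2 \eqqcolon \eta(\nu)\,\lVert\mathcal{A}_{\ell}\rVert_2.
\end{align*}
I would then verify Definition~\ref{def:smoothingPropertyParameter}: since $\eta_{0}(\nu) \to 0$ and the prefactor is constant in $\nu$, we have $\eta(\nu) \to 0$; and since the eigenvalues of $\mathcal{S}_{\text{approxJac}, k, \omega, \ell}$ equal $1 - \omega\lambda$ with $\lambda \in \sigma(\tilde{\mathcal{A}}) \subseteq (0,\omega_0^{-1})$, we obtain $\rho(\mathcal{S}_{\text{approxJac}, k, \omega, \ell}) < 1$ and hence $\overline{\nu}(h) = \infty$.

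The main obstacle is guaranteeing that the constant $\frac{1}{\omega(1 - R\,\epsilon_{[1,R]}(k))}$ is genuinely independent of the level $\ell$, as Definition~\ref{def:smoothingPropertyParameter} demands. This hinges on two facts that I would argue from the discretization of Corollaries~\ref{corol:aff1D}--\ref{corol:aff2D}: the condition number $R = b/a$ of the diagonal $\mathcal{D}_{\ell}$ is $\ell$-independent because the $h_{\ell}^{-2}$ scaling cancels in the ratio, so neither $\epsilon_{[1,R]}(k)$ nor the threshold on $k$ deteriorates under refinement; and $\omega_0^{-1} = \rho(\mathcal{E}\mathcal{A}_{\ell})$ remains bounded uniformly in $\ell$, so a fixed $\omega$ below $\inf_{\ell}\omega_0$ serves all levels simultaneously.
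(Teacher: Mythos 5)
Your proposal is correct and follows the same route as the paper: the paper's entire proof consists of observing that $E_k^*(\mathcal{D}_{\ell})$ is symmetric positive definite (positive weights $\alpha_m$ applied to positive diagonal exponentials) and then invoking Lemma~\ref{lemma:smoothingPropertyParameter} ``similarly to Theorem~\ref{th:smoothingJacobiParameter}'' --- which is exactly your symmetrization $\tilde{\mathcal{A}} = E_k^*(\mathcal{D}_{\ell})^{1/2}\, \mathcal{A}_{\ell}\, E_k^*(\mathcal{D}_{\ell})^{1/2}$ with $\omega < \omega_0$ guaranteeing $0 \leq \omega \tilde{\mathcal{A}} \leq \Id$. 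Where you go beyond the paper is the closing conversion step: in Theorem~\ref{th:smoothingJacobiParameter} the chain ends with $\lVert D_{\ell}(p) \rVert_2 \leq \lVert A_{\ell}(p) \rVert_2$, whereas here one must bound $\lVert E_k^*(\mathcal{D}_{\ell})^{-1} \rVert_2$ by a multiple of $\lVert \mathcal{A}_{\ell} \rVert_2$; the paper's two-line proof leaves this implicit, while you supply it via the entrywise error estimate of Theorem~\ref{th:approximativeExpSumInverse}, at the price of the extra hypothesis $R\,\epsilon_{[1,R]}(k) < 1$. That condition does not appear in the theorem statement (which allows any $k \in \N$), and some such condition is genuinely needed for your route: for fixed small $k$ and large $R$ the lower bound $E_k^*(x) \geq x^{-1} - \epsilon_{[a,b]}(k)$ can degenerate near $x = b$, so your version makes explicit a quantitative requirement on $k$ that the paper's proof glosses over. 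Your final remarks on the $\ell$-independence of $R$ and of $\omega_0$ also address the right point, since Definition~\ref{def:smoothingPropertyParameter} demands $\eta$ independent of the level, and your treatment is consistent with the paper's own convention of letting the damping factor appear in the smoothing bound, as it already does in Theorem~\ref{th:smoothingJacobiParameter}.
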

\begin{proof}
Since all $\widehat{D}_{\ell}^{(m, \mu, \nu)}$ have positive diagonal entries and all weights $\alpha_m$ are positive, $E_k^* (\mathcal{D}_{\ell})$ is symmetric positive definite. 
Therefore the theorem follows with Lemma~\ref{lemma:smoothingPropertyParameter} similarly to Theorem~\ref{th:smoothingJacobiParameter}.
\end{proof}
Since a high number of different diagonal values in equation~\eqref{eq:diagSum} can lead to an increased representation rank of the approximative inverse, we want to find a relaxed approximation of the inverse independent of $L_\nu$.
Therefore we approximate the diagonal $\mathcal{D}_{\ell}$ of our operator $\mathcal{A}_{\ell}$ by
\begin{align}\label{eq:tildeD}
\widetilde{\mathcal{D}_{\ell}} = \sum\limits_{\nu = 0}^d \bigotimes\limits_{\mu < \nu} \Id_{n_{d - \mu}} \otimes ~  c_{d - \nu} D_{\ell}^{(d - \nu)} \otimes \bigotimes\limits_{\mu > \nu} \Id_{n_{d - \mu}} ,
\end{align}
where $c_{\mu} \coloneqq \max_{i \in \I_{\mu}} A_{\ell}^{(\mu)}[i,i] \geq 0$ for all $\mu \in \{1, \dots, d\}$, $c_0 \coloneqq 1$ and $D^{(0)}_{\ell} \coloneqq \diag (A^{(0)})$, to apply again exponential sums.
For $k \in \N$ and corresponding weights we obtain 
\begin{align*}
E_k^*(\widetilde{\mathcal{D}_{\ell}}) = \sum\limits_{m = 1}^k \alpha_m \bigotimes\limits_{\mu = 0}^{d} \exp \left(- \beta_m c_{d - \mu} D_{\ell}^{(d - \mu)} \right),
\end{align*}
if the spectrum of $ \widetilde{\mathcal{D}_{\ell}}$ is sufficiently bounded.
We prove the smoothing property for this approximation in the following theorem.
\begin{theorem}\label{th:smoothingPropertyModifiedJacobi}
Let $\diag (A_{\ell}^{(\mu)})$ and $D_{\ell}^{(\mu)}$ have only nonnegative entries for all $\mu \in \{0, \dots , d\}$.
Let further $\widetilde{\mathcal{D}_{\ell}}$ be defined as in equation~\eqref{eq:tildeD} with spectrum $\sigma( \widetilde{ \mathcal{D}_{\ell} }) \subseteq [a, b]$ for some $0 < a < b \leq \infty$, $k  \in \N$ and let $\alpha_m, \beta_m \in \R_+$ be weights for the exponential sum approximation from Remark~\ref{remark:weights}.
The iteration matrix of the \emph{modified approximate damped Jacobi method} is given by
\begin{align*}
\mathcal{S}_{\text{modJac}, k, \omega} \coloneqq \Id - \omega E_k^* \left(\widetilde{\mathcal{D}_{\ell}}\right) \mathcal{A}_{\ell} ,
\end{align*}
and fulfills the smoothing property for any damping parameter $\omega \in ( 0, \omega_0  )$, where
\begin{align*}
\omega_0 =  \frac{1}{\rho \left(  E_k^* \left(\widetilde{\mathcal{D}_{\ell}}\right) \mathcal{A}_{\ell}\right)}  .
\end{align*}  
\end{theorem}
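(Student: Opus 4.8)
The plan is to mirror the proofs of Theorems~\ref{th:smoothingJacobiParameter} and~\ref{th:smoothingPropertyApproximativeJacobi}, treating $B \coloneqq E_k^*(\widetilde{\mathcal{D}_{\ell}})$ as a symmetric positive definite preconditioner that plays the role of the inverse diagonal in the exact Jacobi case. First I would verify that $B$ is symmetric positive definite: each Kronecker factor $\exp(-\beta_m c_{d-\mu} D_{\ell}^{(d-\mu)})$ appearing in the explicit form of $E_k^*(\widetilde{\mathcal{D}_{\ell}})$ is a diagonal matrix with strictly positive entries, being the exponential of a symmetric matrix. The hypothesis that $\diag(A_{\ell}^{(\mu)})$ and $D_{\ell}^{(\mu)}$ are nonnegative guarantees $c_\mu \ge 0$ and $\widetilde{\mathcal{D}_{\ell}} \ge 0$, so that the spectrum indeed lies in $[a,b]$ with $a>0$ and the scaled exponential sum of Remark~\ref{remark:weights} is meaningful. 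Since Kronecker products and positive-weight sums ($\alpha_m > 0$) preserve positive definiteness, $B = B^T > 0$ follows.

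Next I would symmetrize. Setting $\tilde{\mathcal{A}} \coloneqq B^{1/2} \mathcal{A}_{\ell} B^{1/2}$ yields a symmetric positive semi-definite matrix similar to $B\mathcal{A}_{\ell}$ via $B^{1/2}$, so that $\rho(\tilde{\mathcal{A}}) = \rho(B\mathcal{A}_{\ell})$. Choosing $\omega \in (0,\omega_0)$ with $\omega_0 = \rho(B\mathcal{A}_{\ell})^{-1}$ then gives $0 \le \omega\tilde{\mathcal{A}} \le \Id$, which is precisely the hypothesis of Lemma~\ref{lemma:smoothingPropertyParameter}. Using $\Id - \omega B\mathcal{A}_{\ell} = B^{1/2}(\Id - \omega\tilde{\mathcal{A}})B^{-1/2}$ together with $\mathcal{A}_{\ell} = B^{-1/2}\tilde{\mathcal{A}} B^{-1/2}$, the iterated operator factors as $\mathcal{A}_{\ell}\,\mathcal{S}_{\text{modJac},k,\omega}^{\nu} = B^{-1/2}\,\tilde{\mathcal{A}}(\Id - \omega\tilde{\mathcal{A}})^{\nu}\,B^{-1/2}$. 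Submultiplicativity of the spectral norm and Lemma~\ref{lemma:smoothingPropertyParameter} applied to $\omega\tilde{\mathcal{A}}$ then give
\begin{align*}
\lVert \mathcal{A}_{\ell}\,\mathcal{S}_{\text{modJac},k,\omega}^{\nu}\rVert_2 \le \frac{\lVert B^{-1}\rVert_2}{\omega}\,\lVert \omega\tilde{\mathcal{A}}(\Id-\omega\tilde{\mathcal{A}})^{\nu}\rVert_2 \le \frac{\lVert B^{-1}\rVert_2}{2\omega(\nu+1)}.
\end{align*}

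It remains to convert this into the form $\eta(\nu)\lVert\mathcal{A}_{\ell}\rVert_2$ required by Definition~\ref{def:smoothingPropertyParameter}. Since $B$ approximates $\widetilde{\mathcal{D}_{\ell}}^{-1}$ with spectral error $\epsilon_{[a,b]}(k)$, one bounds $\lambda_{\min}(B)$ from below and hence $\lVert B^{-1}\rVert_2$ from above essentially by $\lVert\widetilde{\mathcal{D}_{\ell}}\rVert_2$; as $\widetilde{\mathcal{D}_{\ell}}$ is built from the majorants $c_\mu = \max_i A_{\ell}^{(\mu)}[i,i]$, it dominates the true diagonal of $\mathcal{A}_{\ell}$, whose largest entry is in turn bounded by $\lVert\mathcal{A}_{\ell}\rVert_2$. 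This should yield $\lVert B^{-1}\rVert_2 \le C\lVert\mathcal{A}_{\ell}\rVert_2$ with $C$ independent of the level, so that $\eta(\nu) = C/(2\omega(\nu+1)) \to 0$ as $\nu \to \infty$. Finally, $\rho(\mathcal{S}_{\text{modJac},k,\omega}) < 1$ for $\omega \in (0,\omega_0)$ certifies convergence and thus $\overline{\nu}(h) = \infty$.

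The main obstacle is this last step: obtaining $\lVert B^{-1}\rVert_2 \le C\lVert\mathcal{A}_{\ell}\rVert_2$ with a constant $C$ that is genuinely independent of the level $\ell$ (and of $p$). One must quantify the deviation of $E_k^*(\widetilde{\mathcal{D}_{\ell}})$ from $\widetilde{\mathcal{D}_{\ell}}^{-1}$ through $\epsilon_{[a,b]}(k)$ and, more delicately, bound $\lVert\widetilde{\mathcal{D}_{\ell}}\rVert_2$ by $\lVert\mathcal{A}_{\ell}\rVert_2$ uniformly, since the modified diagonal from~\eqref{eq:tildeD} overestimates the true diagonal; here the absence of the disjointness assumption~\eqref{eq:disjointAssumption} means that several modes may contribute at a grid point, so $C$ may depend on $d$ but must not depend on $\ell$. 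A related subtlety is that $\omega_0$ depends on $\ell$, so one must argue that $\omega$ can be fixed below $\inf_{\ell}\omega_0$ in order for the resulting $\eta(\nu)$ to be level-independent.
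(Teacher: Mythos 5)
Your proposal follows exactly the paper's route: the paper proves this theorem with the one-line remark that it is analogous to Theorem~\ref{th:smoothingPropertyApproximativeJacobi}, whose proof consists precisely of your opening steps --- positivity of the weights $\alpha_m$ and of the diagonal exponential factors gives symmetric positive definiteness of $E_k^*\left(\widetilde{\mathcal{D}_{\ell}}\right)$, after which Lemma~\ref{lemma:smoothingPropertyParameter} is invoked via the same $B^{1/2}$-symmetrization as in Theorem~\ref{th:smoothingJacobiParameter}. The quantitative conversion to the form $\eta(\nu)\lVert\mathcal{A}_{\ell}\rVert_2$ and the level-independence of the constants that you flag as the main obstacle are left entirely implicit in the paper's ``analogous'' argument, so your treatment is, if anything, more careful than the published proof.
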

\begin{proof}
Analogous to Theorem~\ref{th:smoothingPropertyApproximativeJacobi}.
\end{proof}
	\subsection{Prolongation and restriction}
We want to find a \parameterdependent\ representation of the prolongation and the restriction for our affine operator.
As we want to define the coarser grid using the \Galerkinansatz, we choose the canonical prolongation and restriction as in~\cite{Ballani2013}.
\begin{corollary}\label{cor:prolongationRestriction}
Assume that with
	\begin{align*}
		R_{\ell} = \frac{1}{4}
		\begin{pmatrix}
		1 & 2 & 1 &  &  & & & &\\
		&  & 1 & 2 & 1 & & & &\\
		&  &  &  & \ddots &  &  & & \\
		&  &    &   & & & 1 & 2 & 1
		\end{pmatrix}
		\text{ and } P_{\ell} = 2 R^T_{\ell}~,
	\end{align*}	
	the \Galerkinansatz\ $A_{\ell-1}(p) = R_{\ell} A_{\ell}(p) P_{\ell}$ holds for all $p \in \I$, then $R_{\ell}(p)$ and $P_{\ell}(p)$ have a CP representation of rank~$1$.
\end{corollary}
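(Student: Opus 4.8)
The plan is to exploit the fact that, in the tensor formulation, the restriction and prolongation act only on the physical (spatial) mode and leave every parameter mode untouched; consequently the \parameterdependent\ versions are single Kronecker products, i.e.\ elementary tensors, which by Definition~\ref{def:CPDecomposition} have CP rank~$1$.

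First I would fix the tensor structure. Recall that the operator $\mathcal{A}_{\ell} = \sum_{\nu=0}^d \bigotimes_{\mu=0}^d A_{\ell}^{(\nu)}(\mu)$ places the spatial stiffness matrix in the factor $\mu = d$, while the remaining factors $\mu = 0,\dots,d-1$ act on the parameter modes and are either identities or diagonal matrices $\diag(p^{(\nu)})$. Since the stencils $R_{\ell}$ and $P_{\ell} = 2R_{\ell}^T$ coarsen and refine only the spatial grid and do not depend on $p$, their natural lift to the tensor space is
\begin{align*}
\mathcal{R}_{\ell} = \bigotimes_{\mu=0}^{d-1} \Id_{n_{d-\mu}} \otimes R_{\ell}, \qquad \mathcal{P}_{\ell} = \bigotimes_{\mu=0}^{d-1} \Id_{n_{d-\mu}} \otimes P_{\ell},
\end{align*}
each being a single elementary tensor. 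Applying Definition~\ref{def:CPDecomposition} with representation rank $k=1$ then immediately yields the claim for both $\mathcal{R}_{\ell}$ and $\mathcal{P}_{\ell}$, and since neither depends on $p$, the \parameterdependent\ objects $R_{\ell}(p)$ and $P_{\ell}(p)$ are constant in $p$ with CP rank~$1$.

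The only point that requires verification is that this rank-$1$ lift is consistent with the hypothesis, namely that the full-tensor Galerkin relation $\mathcal{A}_{\ell-1} = \mathcal{R}_{\ell} \mathcal{A}_{\ell} \mathcal{P}_{\ell}$ reduces to the per-parameter spatial relation $A_{\ell-1}(p) = R_{\ell} A_{\ell}(p) P_{\ell}$ assumed in the statement. I would establish this using the mixed-product property of the Kronecker product: because $\mathcal{R}_{\ell}$ and $\mathcal{P}_{\ell}$ are identities on all parameter modes, forming $\mathcal{R}_{\ell}\mathcal{A}_{\ell}\mathcal{P}_{\ell}$ leaves every parameter factor $A_{\ell}^{(\nu)}(\mu)$ with $\mu < d$ unchanged and contracts the spatial factor at $\mu = d$ to $R_{\ell} A^{(\nu)} P_{\ell}$, which is precisely the coarse-grid spatial operator under the assumed \Galerkinansatz. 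Thus there is no genuine obstacle: the entire content of the corollary is the observation that a parameter-independent spatial transfer operator, tensored with identities on the parameter modes, is a single elementary tensor and hence has CP rank~$1$. The most delicate bookkeeping is merely keeping the mode indexing straight so that the transfer operator sits in the factor $\mu = d$ and the identity dimensions $n_{d-\mu}$ match those of the parameter index sets $\I_{\mu}$.
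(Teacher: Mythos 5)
Your proposal is correct and follows essentially the same route as the paper: the paper's proof likewise expands $A_{\ell-1}(p)=R_{\ell}A_{\ell}^{(0)}P_{\ell}+\sum_{\nu=1}^{d}p^{(\nu)}R_{\ell}A_{\ell}^{(\nu)}P_{\ell}$ via the \Galerkinansatz\ and then observes, by the same calculation as for the operator, that $R_{\ell}(p)$ and $P_{\ell}(p)$ are the single elementary tensors $\Id_{n_d}\otimes\dots\otimes\Id_{n_1}\otimes R_{\ell}$ and $\Id_{n_d}\otimes\dots\otimes\Id_{n_1}\otimes P_{\ell}$, hence of CP rank~$1$. Your explicit appeal to the mixed-product property of the Kronecker product just spells out what the paper leaves implicit in that remark.
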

\begin{proof}
Using the \Galerkinansatz\ we get
		\begin{align*}
			A_{\ell-1}(p) = R_{\ell} A_{\ell}(p) P_{\ell} =   R_{\ell} A_{\ell}^{(0)} P_{\ell} + \sum\limits_{\nu=1}^d p^{(\nu)} R_{\ell} A_{\ell}^{(\nu)} P_{\ell}.
		\end{align*}
The same calculation as for the operator then yields
		\begin{align*}
			R_{\ell}(p) = & \Id _{n_d}\otimes \Id_{n_{d-1}} \otimes \dots \otimes \Id_{n_1} \otimes R_l, \\
			P_{\ell}(p) = & \Id _{n_d}\otimes \Id_{n_{d-1}} \otimes \dots \otimes \Id_{n_1}  \otimes P_l
		\end{align*}
and thus a CP representation of rank~$1$.
\end{proof}
Concluding we have developed all components needed for a \parameterdependent\ multigrid method.
\section{Numerical experiments}\label{sec:NumExp}
We derived \parameterdependent\ representations of the operator, the \righthandside, the prolongation, the restriction and an approximation of the smoother.
Now, we present numerical experiment of the corresponding multigrid method for \parameterdependent\ problems. 
We display the geometry used in our numerical experiments in Figure~\ref{fig:TwoCookieGeometry} and discretize equation~\eqref{eq:cookie} by the finite-difference method.

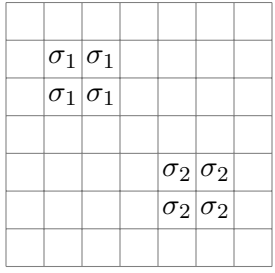
\begin{figure}
\centering   
\begin{tikzpicture}
\draw[step=0.5cm,color=gray] (0,0) grid (3.5,3.5);
\node at (2.25,1.25) {$\sigma_2$};
\node at (2.75,1.25) {$\sigma_2$};
\node at (2.25,0.75) {$\sigma_2$};
\node at (2.75,0.75) {$\sigma_2$};
\node at (0.75,2.25) {$\sigma_1$};
\node at (1.25,2.25) {$\sigma_1$};
\node at (0.75,2.75) {$\sigma_1$};
\node at (1.25,2.75) {$\sigma_1$};
\end{tikzpicture} 
\caption{Geometry of $[\num{0},\num{7}] \times [\num{0},\num{7}]$ used for the numerical experiments}\label{fig:TwoCookieGeometry}
\end{figure}

As mentioned in Corollary~\ref{corol:aff2D}, the operator has an affine structure and therefore we get the following \parameterdependent\ linear system:
\begin{equation}\label{eq:TwoCookieProblem}
\left(A^{(0)}_{\ell} + p^{(1)} A_{\ell}^{(1)} + p^{(2)} A_{\ell}^{(2)} \right) ~ u_{\ell}(p) = f_{\ell} ,
\end{equation}
with $p^{(1)},\,p^{(2)} \in \lbrace \num{0},\num[quotient-mode = fraction]{1/100},\num[quotient-mode = fraction]{2/100}, \dots, \num{1} \rbrace$, \righthandside\ $f_{\ell} \equiv 1$ and $A_{\ell}^{(d)}$ as in Theorem~\ref{th:cookie2DStencil}.
We choose the grid such that the coarsest grid $\ell = 0$ has $\num{7} \times \num{7}$ points and refine this grid to $\num{15} \times \num{15}$, then to $\num{31} \times \num{31}$ and for $\ell = 3$ to $\num{63} \times \num{63}$ points.

We now want to solve equation~\eqref{eq:TwoCookieProblem} with the V-cycle multigrid method using the \parameterdependent\ damped Jacobi method by means of exponential sums from Section~\ref{sec:expSumJac} as smoother.
In our first numerical experiment we want to compare it with the V-cycle multigrid method using the damped Richardson method as smoother and with the \parameterdependent\ damped Jacobi method by means of exponential sums as an iterative solver. 
In the log-log plot, shown in Figure~\ref{fig:ExperimentComparison}, we plot the relative residual of the finest grid error, i.e., $\ell = 3$, against the number of iteration.
In our numerical experiments we use $5$ presmoothing and $5$ postsmoothing steps and we choose the damping factor of $\omega = \num{1e-5}$ for the Richardson method and of $\omega = \num[quotient-mode = fraction]{1 /2}$ for the Jacobi method, since smaller factors seemed to slow down convergence, while the methods with larger damping factors sometimes diverged.
We truncate the representation of the solution after each rank increasing operation using the method described in~\cite{Grasedyck2010} with a tolerance value of $\num{1e-7}$.
\begin{figure}
\centering
\includegraphics[width=0.8\textwidth]{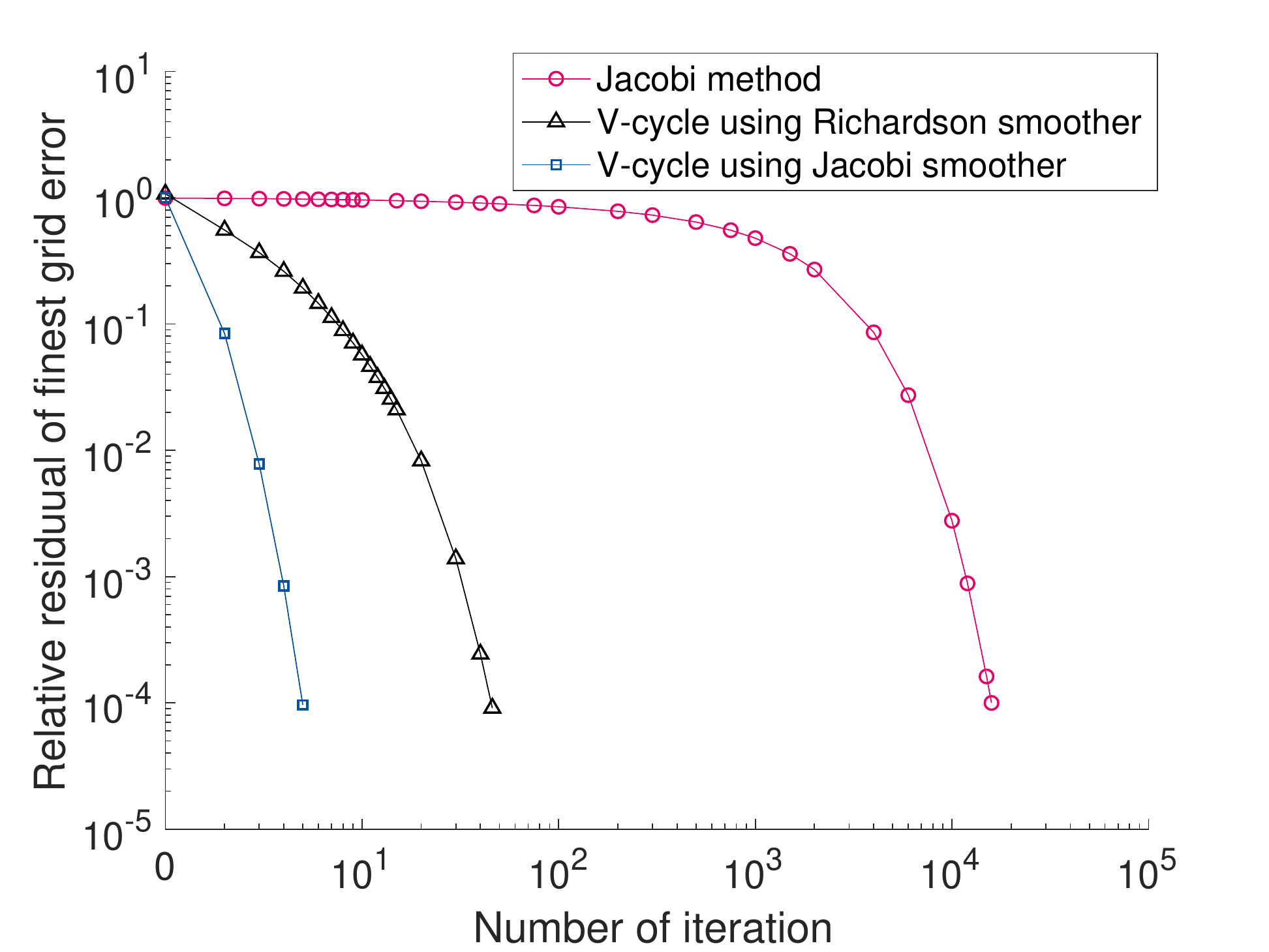}
\caption{Comparison of different solvers of the \parameterdependent\ 
linear system from equation~\eqref{eq:TwoCookieProblem} up to a relative residual of $\epsilon = \num{1e-4}$}\label{fig:ExperimentComparison}
\end{figure}

In Figure~\ref{fig:ExperimentComparison} we observe that the damped Jacobi method as a solver needs $\num{\sim 16000}$ iterations in order to reduce the relative residual to less then the prescribed tolerance of $\epsilon=\num{1e-4}$.
Therefore the damped Jacobi method seems to have a slow convergence behavior.
We also observe that using the damped Richardson method as smoother in a V-cycle multigrid method, we need $\num{\sim 45}$ iterations, and that instead using the damped Jacobi method as smoother in a V-cycle multigrid method, we need $\num{\sim 5}$ iterations to converge to the prescribed tolerance.
Due to this, we observe for the V-cycle multigrid method using the Jacobi method as smoother a faster convergence behavior then using the Richardson method as smoother.

In our next numerical experiment we compare the V-cycle multigrid method using the \parameterdependent\ damped Jacobi method by means of exponential sums from Section~\ref{sec:expSumJac} as smoother for different grid sizes.
In the log-lin plot of Figure~\ref{fig:ExperimentTwoGrid} we plot the relative residual of the finest grid solution against the number of iterations for some grid sizes.
We used the grid of level $\ell = 0$ with $\num{7} \times \num{7}$ points from above as coarsest grid in all $3$ cases.
\begin{figure}
    \centering
     \includegraphics[width=0.8\textwidth]{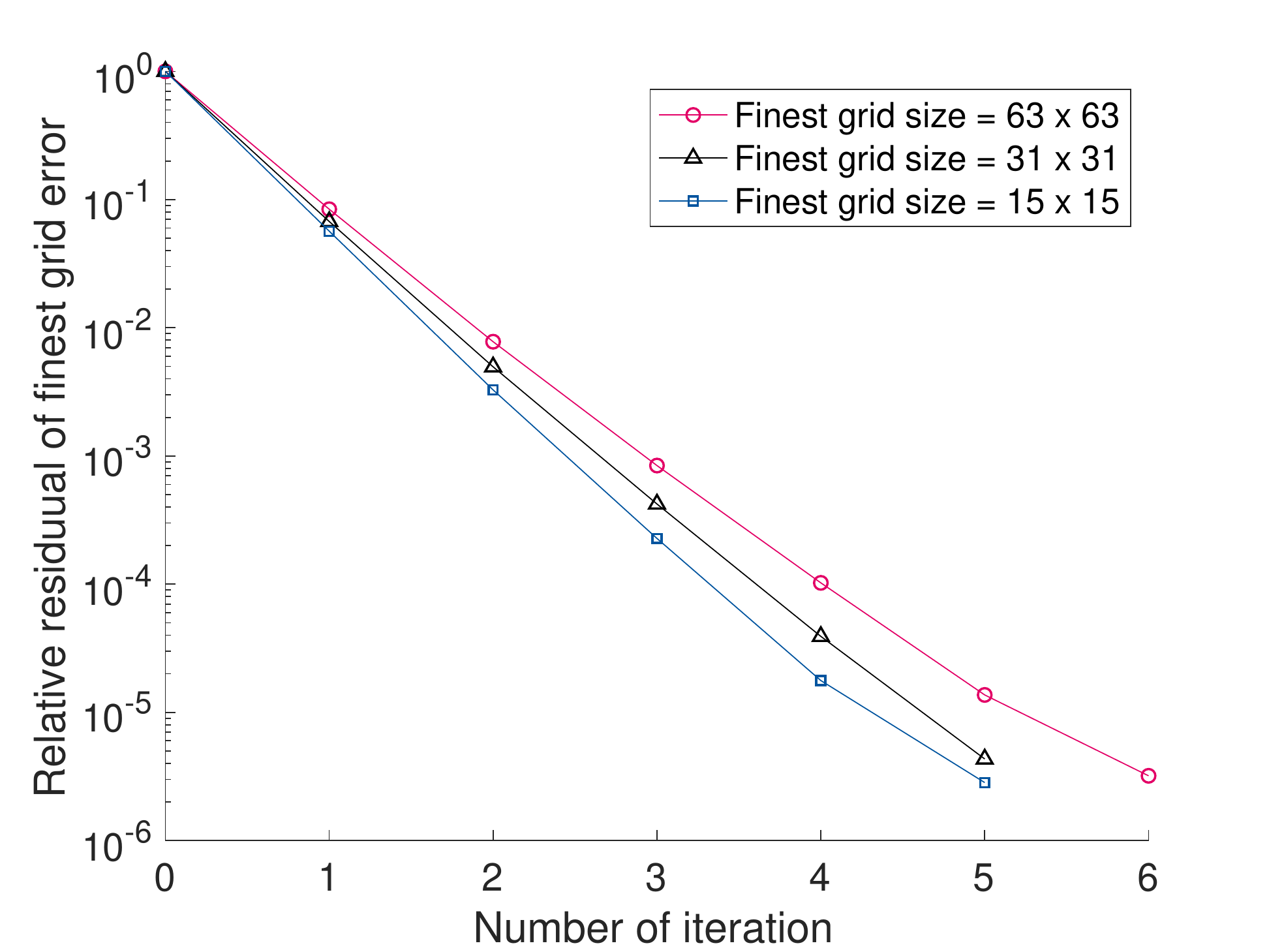}
        \caption{Comparison of the V-cycle multigrid method for different finest grid sizes as solver for the \parameterdependent\ linear system from equation~\eqref{eq:TwoCookieProblem} up to a relative residual of $\epsilon = \num{1e-5}$}\label{fig:ExperimentTwoGrid}
\end{figure}

In Figure~\ref{fig:ExperimentTwoGrid} we observe that the convergence rate of the multigrid method using our damped Jacobi smoother seems to be grid size independent.

In future work, one could use the level-wise parallelism of the hierarchical Tucker format to accelerate the arithmetic operations within the format.
Using a balanced tree allows the reduction of the cost dependency for most operations in Table~\ref{tab:tensor-arithmetic} from $d$ to $\log(d)$, cf.~\cite{Etter2016,Grasedyck2017,Grasedyck2019,Grasedyck2016}.

In summary, given a \parameterdependent\ representation of a linear system, such that the smoothing and approximation properties hold, we can guarantee the multigrid convergence. 
For a model problem, using \lowrank\ tensor formats, we derived such a \parameterdependent\ representation and an approximation of the damped \parameterdependent\ Jacobi method by means of exponential sums.
In numerical experiments we observed a grid size independent convergence rate using the multigrid method with our novel approximation of the damped Jacobi smoother.
\section*{Acknowledgments}%
L.\@ Grasedyck, C.\@ L\"obbert and T.\@ A.\@ Werthmann have been supported by the German Research Foundation (DFG) within the DFG priority programme 1648 (SPPEXA) under Grant No.\@ GR-3179/4-2 and 1886 (SPPPoly) under Grant No.\@ GR-3179/5-1.

M.\@ Klever has been supported by the DFG through the grant SFB/TRR-55.
\printbibliography{}
\end{document}